\documentclass[a4paper,12pt]{article}
\openup 2pt
\usepackage{blindtext}

\usepackage{subfiles} 
\usepackage[pagewise]{lineno}
\usepackage{cite,titlesec,graphics,amsmath, amssymb, amscd, amsthm, euscript, hyperref,
indentfirst,fancyhdr,pgfplots,stmaryrd, textcomp, enumerate, mathrsfs
} \usetikzlibrary{arrows.meta,calc,bending,decorations.markings,decorations.pathmorphing}

\graphicspath{{.}{./eps/}}

\hypersetup{
    colorlinks=true,
    linkcolor=blue,
    filecolor=magenta,      
    urlcolor=cyan,
    pdftitle={Affine-Additive},
    pdfpagemode=FullScreen,
    }
\relpenalty=10000 \binoppenalty=10000 \uchyph=-1 \flushbottom
\topmargin 5pt \headsep 15pt \oddsidemargin 0pt \evensidemargin
0pt \textheight 9.1in \textwidth 6.5in

\makeatletter \theoremstyle{plain}

\newcommand{\R}{\mathbb R}
\newcommand{\N}{\mathbb N}

\newcommand{\Aa}{\mathcal{AA}}

\renewcommand{\L}{\mathcal L}
\newcommand{\C}{\mathbb C}

\renewcommand{\H}{\mathbb H}
\newcommand{\bH}{\mathbf H}
\newcommand{\bG}{\mathbb G}

\newcommand{\RT}{\mathcal{RT}}
\newcommand{\Mod}{\mathrm{Mod}}

\newtheorem{thm}{Theorem}[section]
\newtheorem{cor}[thm]{Corollary}
\newtheorem{prop}[thm]{Proposition}

\theoremstyle{definition}
\newtheorem{defn}[thm]{Definition}
\newtheorem{exam}[thm]{Example}

\DeclareMathOperator{\dist}{\text{dist}}
\DeclareMathOperator{\diam}{\text{diam}}

\makeatother
\pgfplotsset{compat=1.18}

\newcommand\blfootnote[1]{%
  \begingroup
  \renewcommand\thefootnote{}\footnote{#1}%
  \addtocounter{footnote}{-1}%
  \endgroup
}

\providecommand{\keywords}[1]
{
  \small	
  {\textit{Key words and phrases.}} #1
}

\providecommand{\subjclass}[1]
{
  \small	
  {\textit{$2020$ Mathematics Subject Classifications.}} #1
}

\begin{document}

\title{Hyperbolicity of the sub-Riemannian affine-additive group}

\author{Zolt\'an M. Balogh
  \and
  Elia  Bubani
  \and
  Ioannis D. Platis
}

\newcommand{\Addresses}{{
  \bigskip
  \footnotesize

  Z.M.~Balogh, (Corresponding author), \textsc{Universit\"at Bern,
Mathematisches Institut (MAI), \\
Sidlerstrasse 5,
3012 Bern,
Switzerland.}\par\nopagebreak
  \textit{E-mail address}, Z.M.~Balogh: \texttt{zoltan.balogh@unibe.ch}

  \medskip

  E.~Bubani,  \textsc{Universit\"at Bern,
Mathematisches Institut (MAI), 
Sidlerstrasse 5,
3012 Bern,
Switzerland.}\par\nopagebreak
  \textit{E-mail address}, E.~Bubani: \texttt{elia.bubani@unibe.ch}

  \medskip

  I.D.~Platis, \textsc{Department of Mathematics and Applied Mathematics, University of Crete
Voutes Campus, 70013 Heraklion, Greece}\par\nopagebreak
  \textit{E-mail address}, I.D.~Platis: \texttt{jplatis@math.uoc.gr}

}}
\date{}

\maketitle
\centerline{\textit{Dedicated to Hans Martin Reimann}}

\begin{abstract}
We consider the affine-additive group as a metric measure space with a canonical left-invariant measure and a left-invariant sub-Riemannian metric. We prove that this metric measure space is locally 4-Ahlfors regular and it is hyperbolic, meaning that it has a non-vanishing 4-capacity at infinity. This implies that the affine-additive group is not quasiconformally equivalent to the Heisenberg group or to the roto-translation group in contrast to the fact that both of these groups are globally contactomorphic to the affine-additive group. Moreover, each quasiregular map, from the Heisenberg group to the affine-additive group must be constant. 
\end{abstract}

\blfootnote{\date{\today}}
\blfootnote{\subjclass{53C17, 30L10}}
\blfootnote{\keywords{Quasiconformal maps, Sub-Riemannian metric, Heisenberg group}}
\blfootnote{\thanks{This research was supported by the Swiss National Science Foundation, Grants Nr. 191978 and 228012}}

\tableofcontents 

\newpage

\section{Introduction and statement of the main result}

Due to work of Heinonen and Koskela, \cite{HK} the theory of quasiconformal mappings has been developed in the setting of general metric measure spaces satisfying some mild regularity properties.
For the related analytic machinery including upper gradients, capacities and Sobolev spaces we refer to the book of  Heinonen, \cite{H}, or the book of Heinonen, Koskela, Shanmungalingam and Tyson \cite{HKST}. 

An important class of examples where these results apply is the geometric setting of sub-Riemannian spaces, including Heisenberg groups. Motivated by Mostow rigidity \cite{MM}, the theory of quasiconformal mappings in the Heisenberg group has been developed by Pansu \cite{Pansu} and Kor\'anyi and Reimann in \cite{KR1} and  \cite{KR2}. This theory is rather advanced, examples of non-trivial quasiconformal maps acting between Heisenberg groups have been constructed as flows of contact vector fields by Kor\'anyi and Reimann \cite{KR1}, \cite{KR2} and by lifting of planar symplectic maps by Capogna and Tang \cite{CT}. Extremal quasiconformal maps that are similar to the planar stretch map,  acting between Heisenberg groups were found by Balogh, F\"assler and Platis \cite{BFP}. Using the flow method of Kor\'anyi and Reimann, Balogh established in \cite{B} the existence of quasiconformal maps between Heisenberg groups distorting the Hausdorff dimension of Cantor sets in a rather arbitrary fashion.  

By a theorem of Darboux, \cite{M} every contact manifold is locally bi-Lipschitz to the Heisenberg group, one would expect that the results of quasiconformal maps could be transposed from the Heisenberg setting to general contact manifolds endowed with a sub-Riemannian metric. However, this turns out not to be the case as contact manifolds are not \textit{globally} quasiconformal to the Heisenberg group. A remarkable example of this has been found by F\"assler, Koskela and Le Donne \cite{FKL} who proved that the sub-Riemannian roto-translation group is not globally quasiconformal to the Heisenberg group, in contrast to the fact, that there exists a global contactomorphism between these spaces. 

In the present paper we consider another natural three dimensional Lie group: the affine-additive group endowed with a sub-Riemannian metric. We prove that it is also globally contactomorphic to both, the Heisenberg group and (by \cite{FKL}) also to the roto-translation group. Howewer, the affine-additive group is not globally quasiconformal to neither the Heisenberg, nor to the roto-translation group. The reason for the non-existence of a global quasiconformal map between these groups is their behaviour at infinity as formulated by Zorich in \cite{Z} (see also \cite{HR}, \cite{FLT}). We prove that the affine additive groups has a non-vanishing 4-capacity at infinity, thus it is hyperbolic in the terminology of \cite{Z}, while both the Heisenberg and the roto-translation groups are parabolic, having a vanishing 4-capacity at infinity. 

To be more precise we define the affine-additive group $(\Aa, \star)$ as the Cartesian product of $\R$ and the hyperbolic right half plane:
$$ \Aa = \R\times \bH_\C^1 \ \text{where} \ 
    \bH_\C^1:=\{(\lambda,t):\lambda>0, t \in \R\} \ 
$$
together with the group law 
$$(a', \lambda', t') \star (a, \lambda, t) = (a' +a, \lambda'\lambda, \lambda't +t')  $$
and the contact $1$-form
\begin{equation*}
\vartheta=\frac{dt}{2\lambda}-da.    
\end{equation*}
For a detailed presentation of the geometric structure of the the affine-additive group $\Aa$ we refer to Section \ref{AA} of this paper. At this point, we can say that the  Carnot-Carath\'eodory distance $d_{\Aa}$ will be defined as the sub-Riemannian distance on $\Aa$ generated by the horizontal vector fields 
\begin{equation*} U =\partial_a+2\lambda\partial_t,  \  V =2\lambda\partial_\lambda, 
\end{equation*}
and a sub-Riemannian metric making $\{U, V\}$ an orthonormal frame. The left-invariant Haar measure on the group $\Aa$ is given by $
d\mu_\Aa=\frac{da\, d\lambda\, dt}{\lambda^2}.
$
The main result of the paper is the following:
\begin{thm} \label{main}
The metric measure space $(\Aa, d_\Aa, \mu_\Aa)$ is a locally $4-$Ahlfors regular space. It is globally contactomorphic to the first Heisenberg group $\H$. The sub-Riemannian manifold, $(\Aa, d_\Aa, \mu_\Aa)$ is $4$ hyperbolic, in particular there is no non-trivial quasiregular map $F: \H \to \Aa$. 
 \end{thm}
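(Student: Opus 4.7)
A direct computation gives $[U,V]=-4\lambda\,\partial_t$, so $\{U,V,[U,V]\}$ spans $T\Aa$ at every point with H\"ormander weights $(1,1,2)$ and pointwise homogeneous dimension $4$. Since both $d_\Aa$ and $\mu_\Aa$ are left-invariant, it suffices to estimate $\mu_\Aa(B(e,r))$ at the identity $e=(0,1,0)$ for small $r$; the ball-box theorem of Nagel-Stein-Wainger then yields $\mu_\Aa(B(e,r))\asymp r^4$, which together with left-invariance gives local $4$-Ahlfors regularity at every point.

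\textbf{Global contactomorphism.} I would exhibit an explicit diffeomorphism $\Phi:\H\to\Aa$ of the underlying $\R^3$'s with $\Phi^{*}\vartheta = g\,\theta_\H$ for a nowhere-vanishing smooth function $g$, where $\theta_\H$ denotes the standard Heisenberg contact form. A natural ansatz takes $\lambda$ to depend on a single Heisenberg coordinate (say $\lambda = e^y$) and $a$ to be affine in the remaining coordinates, reducing the contactness condition to a small ODE system solvable in closed form; one then checks directly that the resulting formula defines a global diffeomorphism.

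\textbf{$4$-hyperbolicity.} This is the heart of the proof: the goal is $\Cap_4(K,\infty)>0$ for some compact $K\subset\Aa$. Motivated by the $4$-Laplace equation, I compute for $u=f(\lambda)$, using $|\nabla_Hu|^2=(Uu)^2+(Vu)^2$ with $Uu=0$, $Vu=2\lambda f'$, and $\mathrm{div}_{\mu_\Aa}V=-2$, that $\Delta_4 f = 16\lambda^3(f')^2\bigl(2f'+3\lambda f''\bigr)$. The non-constant $4$-harmonic function of $\lambda$ alone is therefore $f(\lambda)=\lambda^{1/3}$. Using this as a template Green-type potential, I would construct suitably localized admissible cutoffs for $\Cap_4(K,\infty)$ and prove a uniform positive lower bound on their $4$-energy. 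Equivalently, by Fuglede duality it suffices to estimate from below the $4$-modulus of the horizontal curve family joining $K$ to infinity, sliced over the $a,t$-directions, with the hyperbolic-plane structure of the $\bH_\C^1$ factor supplying the geometric mechanism. The main obstacle is precisely this localization: the bare potential $\lambda^{1/3}$ has infinite global $4$-energy because $\mu_\Aa$ is infinite in the $a,t$ directions, so the argument must delicately balance the cost of cutting off against the inherent hyperbolic-direction energy.

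\textbf{QR rigidity.} Once $\Aa$ is $4$-hyperbolic and $\H$ is $4$-parabolic (the latter via standard logarithmic cutoffs in the Kor\'anyi gauge, with $4$-energy $\asymp 1/\log R\to 0$), a Zorich-Holopainen-Rickman-type Liouville theorem for quasiregular mappings between $Q$-Loewner metric measure spaces at the critical exponent $Q=4$ forces any quasiregular $F:\H\to\Aa$ to be constant.
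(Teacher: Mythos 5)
The main gap is in the $4$-hyperbolicity argument, which is the heart of the theorem and which you leave unfinished. Exhibiting the $4$-harmonic profile $f(\lambda)=\lambda^{1/3}$ only produces a candidate extremal, hence at best an \emph{upper} bound on $\Cap_4(K,\infty)$; hyperbolicity requires a \emph{lower} bound valid for \emph{all} test functions (equivalently, for all admissible densities of the curve family joining $K$ to infinity), and you explicitly concede that you do not know how to pay for the cutoff in the $a,t$ directions. The paper shows that this localization problem simply does not arise if one works with the modulus of the curve family between two fixed compact slabs: it takes $E=\{(a,1,t):|a|,|t|\le 1\}$ and $F_n=\{(a,\tfrac1n,t):|a|,|t|\le1\}$, whose mutual $d_\Aa$-distance tends to infinity, and for every admissible $\rho$ tests admissibility along the horizontal segments $\lambda(s)=1-(1-\tfrac1n)s$ with $a,t$ frozen. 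This yields $\int_{1/n}^1\rho(a,\lambda,t)\,\tfrac{d\lambda}{2\lambda}\ge1$ for each $(a,t)\in[-1,1]^2$, and H\"older with exponents $4$ and $\tfrac43$ applied to the splitting $\tfrac{\rho}{\lambda^{1/2}}\cdot\tfrac{1}{\lambda^{1/2}}$ gives $\int_\Aa\rho^4\,d\mu_\Aa\ge 2^4/3^3$ uniformly in $n$, precisely because $\int_0^1\lambda^{-2/3}\,d\lambda<\infty$. That convergent integral is exactly the integrability your exponent $1/3$ is detecting, so your heuristic identifies the right mechanism, but the quantitative step is what is missing. One also needs the short reduction, carried out at the start of the paper's Section 4, from ``$\liminf_n\Mod_4(E,F_n)>0$ with $\dist(E,F_n)\to\infty$'' to hyperbolicity in the ring-domain sense of the definition, which you do not address.

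The remaining parts are essentially sound but uneven. Your ball-box/Nagel--Stein--Wainger route to local $4$-Ahlfors regularity is a valid alternative to the paper's (which transfers Heisenberg ball volumes through the locally bi-Lipschitz contactomorphism), and your bracket $[U,V]=-4\lambda\partial_t$ agrees with the paper's $[U,V]=-2(U+W)$. The contactomorphism, however, is only an ansatz: you must actually produce the map. The paper writes it down explicitly, $g(x,y,t)=\bigl(xe^{-y},\,e^{y},\,\tfrac12(t-2xy+4x)\bigr)$ with $g^*\vartheta=\tfrac{1}{4e^y}\vartheta_\H$, confirming your guess $\lambda=e^y$ but with $a=xe^{-y}$, which is not affine in the remaining coordinates. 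The final Liouville step is the same as the paper's (Theorem 4.8.1 of F\"assler--Lukyanenko--Tyson), modulo the small slip that the $4$-energy of the logarithmic cutoff in $\H$ decays like $(\log R)^{-3}$ rather than $(\log R)^{-1}$.
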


The paper is organized as follows: in the Section \ref{Prelim} we fix notation, recall  preliminaries on metric measure spaces and give a sufficient condition on the parabolicity of a metric measure space. In Section \ref{AA} we consider the sub-Riemannian metric of the affine-additive groups in greater detail.  Here we prove that the affine-additive group is globally contactomorphic to the Heisenberg group. In Section \ref{Main} we prove the main result of this paper about the hyperbolicity of the affine-additive group and discuss its consequences. 

\section{Preliminaries on metric measure spaces} \label{Prelim}
We start by recalling some concepts and results on the theory quasiconformal (QC) maps in the setting of general metric measure spaces. For more details we refer to the paper of Heinonen and Koskela \cite{HK}, the book of Heinonen \cite{H} and the book of Heinonen, Koskela, Shanmugalinga and Tyson \cite{HKST}. 

Let us recall that a homeomorphism $f: X\rightarrow Y$ between two metric spaces  $(X,d_X)$ and $(Y,d_Y)$ is called \textit{quasiconformal} if there exists $K\ge 1$ such that 
\begin{equation}\label{metricKqc}
    \limsup_{r\rightarrow0}\frac{\sup_{d_X(p,q)\leq r}d_Y(f(p),f(q))}{\inf_{d_X(p,q)\geq r}d_Y(f(p),f(q))}:=H_f(p)\leq K,
\end{equation}    
for all $p$ in $X$.

A metric measure space is a triple $(X, d_X, \mu_X)$ comprising a non empty set $X$, a distance function $d_X$ and a regular Borel measure $\mu_X$ such that $(X,d_X)$ is a complete, and separable metric space and every metric ball has positive and finite measure. This setting will be our standing assumption throughout this paper. 

Given a point $p \in X$ and a radius $r > 0$, we employ the following
notation for balls:
$$
B_{d_X}(p,r)=\{q\in X:d_X(p,q)<r\}
\text{ and }
\overline{B}_{d_X}(p,r)=\{q\in X:d_X(p,q)\le r\}.
$$
Where it will not cause confusion, we will replace $B_{d_X}(p,r)$ by $B(p,r)$.

A metric measure space $(X, d_X, \mu_X)$ is called \textit{Ahlfors $Q$-regular},
$Q > 1$, if there exists a constant $C\ge1$ such that for all $p\in X$ and $0 < r \leq \diam X$, we have 
\begin{equation}\label{Ahlfors regular}
    C^{-1}r^Q\leq\mu_X(\overline{B}_{d_X}(p,r))\leq Cr^Q.
\end{equation} 

Further, we say that $(X, d_X, \mu_X)$ is \textit{locally Ahlfors $Q$-regular}, if for every compact
subset $V \subset X$, there is a constant $C \geq 1$ and a radius $r_0>0$ such that for
each point $p \in V$ and each radius $0 < r \leq r_0$ we have 
\begin{equation}\label{local Ahlfors regular}
    C^{-1}r^Q\leq\mu_X(\overline{B}_{d_X}(p,r))\leq Cr^Q.
\end{equation}

An important geometric quantity in the theory of quasiconformal mappings is the  $Q$-modulus of a curve family. Let us recall, that if  $\Gamma$ be a family of curves in the metric measure space $(X,d_X,\mu_X)$, a  Borel function $\rho: X \rightarrow [0, \infty]$
is said to be \textit{admissible} for $\Gamma$ if for every rectifiable $\gamma \in \Gamma$, we have 
\begin{equation*}
1\leq \int_\gamma \rho\, d\ell_X \,.
\end{equation*}
Such a $\rho$ shall be also called a density and the set of all densities shall be denoted by ${\rm Adm}(\Gamma)$.
If $Q>1$ then the $Q$-\textit{modulus} of $\Gamma$ is
\begin{equation*}
    \Mod_Q(\Gamma)=\inf_{\rho\in{\rm Adm}(\Gamma)}\int_X \rho^Q \,d\mu_X.
\end{equation*}
It follows immediately from this definition that if  $\Gamma_0$ and $\Gamma$ are two curve families such that each curve $\gamma \in \Gamma$ has a sub-curve
$\gamma_0 \in \Gamma_0$, then 

\begin{equation} \label{subcurve-mod}
\Mod_Q(\Gamma)\leq\Mod_Q(\Gamma_0).
\end{equation} 

Let us recall that by Theorem 3.8 in \cite{KW}, if  $(X,d_X,\mu_X)$ and $(Y,d_Y,\mu_Y)$ are separable, locally finite metric measure
spaces that are both locally Ahlfors $Q$-regular for some given $Q>1$ and  $f : X \rightarrow Y$ is a quasiconformal map then there exists $H\geq 1$ such that 
\begin{equation} \label{mod-qc}
    \Mod_Q (\Gamma)\leq H \,\Mod_Q (f(\Gamma)), 
\end{equation}
for every curve family $\Gamma$ in $X$, i.e., the $\Mod_Q$ is quasi-preserved by quasiconformal maps. 

For two disjoint compact sets $E, F \subset X$  we consider the number $\Mod_Q(E, F) =  \Mod_Q (\Gamma)$ 
where $\Gamma$ is the set of all rectifiable curves connecting $E$ and $F$. If $x_0 \in X$ is a fixed point and $0< r < R < \diam X$, $E= \partial B(x_0, r) $ and $F = \partial B(x_0, R)$ then the quantity $\Mod_Q(E, F)= \Mod_Q(\mathcal{D}(r, R))$ is the so called modulus of the ring domain 
$$\mathcal{D}(r, R) = \{ x \in X: r<d(x, x_0)< R \} .$$ 

The following definition is a reformulation in the setting of metric spaces of the corresponding concept by  Zorich \cite{Z}. For related results we refer also to Holopainen and Rickman \cite{HR}, Coulhon, Holopainen and Saloff-Coste \cite{CHSC}, F\"assler, Lukyanenko and Tyson \cite{FLT}.

\begin{defn} The metric measure spaces $(X, d_X, \mu_X)$ is $Q$-parabolic if and only if for some $x_0 \in X$ and $R_0>0$ we have 
\begin{equation} \label{conf-par}
  \lim_{R \to \infty } \Mod_Q(\mathcal{D}(R_0, R))  =0. 
\end{equation}
 Otherwise we call $(X, d_X, \mu_X)$ $Q$-hyperbolic.  
\end{defn}

Let us note, that parabolicity of a metric measure space is a property about the behaviour of the space at infinity. In particular, $(X, d_X, \mu_X)$ is $Q$-parabolic if and only if for \textit{any} $x_0 \in X$ and $R_0>0$ \eqref{conf-par} holds. We remark that $Q$-parabolicity of a metric measure space can be defined equivalently by capacity of condensers (see Section 7 in \cite{Z} and Definition 4.5.4 in \cite{FLT}).

The following sufficient condition seems to be known to experts, however we could not locate a precise reference and  we include it for the sake of completeness.

\begin{prop} \label{Q-par-prop} 
Let $Q>1$ and $(X, d_X, \mu_X)$ be a metric measure space such that there exists $x_0 \in X$,  $R_0>0$ and $K>0$ such that for all $R>R_0$ we have 
\begin{equation} \label{measure-growth}
    \mu_X(B(x_0, R)) \leq K R^Q.
\end{equation}
 Then $(X, d_X, \mu_X)$ is $Q'$-parabolic for any $Q'\geq Q$.     
\end{prop}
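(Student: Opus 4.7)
The plan is to exhibit an explicit admissible test density for the family $\Gamma$ of rectifiable curves connecting $\partial B(x_0,R_0)$ to $\partial B(x_0,R)$, and to estimate its $Q'$-energy using only the volume growth hypothesis \eqref{measure-growth}. This is a standard Zorich/Holopainen--Rickman style construction based on dyadic annuli.

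Concretely, I decompose the exterior of $B(x_0,R_0)$ into dyadic annular shells
\[
A_k \,=\, B(x_0,2^{k+1}) \setminus B(x_0,2^k),
\]
and let $k_0$ be the smallest integer with $2^{k_0}\geq R_0$ and $k_1$ the largest integer with $2^{k_1+1}\leq R$, so that $N := k_1-k_0+1 \sim \log_2(R/R_0) \to \infty$ as $R\to\infty$. Any rectifiable $\gamma\in\Gamma$ starts in $\overline B(x_0,2^{k_0})$ and leaves $B(x_0,2^{k_1+1})$, so for each $k_0\le k\le k_1$ it contains a sub-arc joining the sphere $\{d_X(\cdot,x_0)=2^k\}$ to the sphere $\{d_X(\cdot,x_0)=2^{k+1}\}$ inside the closure of $A_k$; by the reverse triangle inequality the length of this sub-arc is at least $2^{k+1}-2^k=2^k$. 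Hence the density
\[
\rho(x) \,=\, \frac{1}{N}\sum_{k=k_0}^{k_1}\frac{1}{2^k}\,\chi_{A_k}(x)
\]
satisfies $\int_\gamma \rho\,d\ell_X \geq \frac{1}{N}\sum_{k=k_0}^{k_1} \frac{1}{2^k}\cdot 2^k = 1$, so $\rho\in{\rm Adm}(\Gamma)$.

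Next I would estimate the $Q'$-energy using the disjointness of the $A_k$, the inclusion $A_k\subset B(x_0,2^{k+1})$ and the hypothesis \eqref{measure-growth}:
\[
\int_X \rho^{Q'}\,d\mu_X \,\leq\, \frac{1}{N^{Q'}}\sum_{k=k_0}^{k_1}\frac{\mu_X(B(x_0,2^{k+1}))}{2^{kQ'}} \,\leq\, \frac{K\,2^Q}{N^{Q'}}\sum_{k=k_0}^{k_1} 2^{k(Q-Q')}.
\]
When $Q'>Q$, the geometric series is bounded by a constant depending only on $R_0$, so $\Mod_{Q'}(\mathcal{D}(R_0,R))\lesssim N^{-Q'}\to 0$. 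When $Q'=Q$, the sum has value $N$, so $\Mod_{Q'}(\mathcal{D}(R_0,R))\lesssim N^{-(Q-1)}\to 0$ (using $Q>1$). Either way, letting $R\to\infty$ yields \eqref{conf-par}, proving $Q'$-parabolicity.

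The one non-routine ingredient is the length-in-shell lower bound, but it is a one-line consequence of the reverse triangle inequality applied to the distance function $d_X(\cdot,x_0)$ along the curve, so I anticipate no real obstacle. The rest is bookkeeping: one must only verify that $N\to\infty$ as $R\to\infty$ and keep track of the dichotomy $Q'=Q$ versus $Q'>Q$ in the final geometric sum.
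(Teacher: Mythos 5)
Your proof is correct and follows essentially the same route as the paper's: a dyadic-annulus decomposition of the ring domain, a test density comparable to $\tfrac{1}{N\,d_X(x_0,\cdot)}$ made admissible by the reverse-triangle-inequality length bound in each shell, and a $Q'$-energy estimate via \eqref{measure-growth}. The only cosmetic differences are that the paper uses shells $2^kR_0$ and the continuous density $\tfrac{3}{N}\,d_X(x_0,x)^{-1}$ in place of your piecewise-constant one, and folds your two cases $Q'=Q$ and $Q'>Q$ into a single displayed estimate.
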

\begin{proof}
We shall  consider the ring domain $\mathcal{D}(R_0, R) = \{ x \in X: R_0 < d_X(x, x_0)< R\}$ for $R>R_0$. Our purpose is to show that 
\begin{equation*}
 \lim_{R \to \infty} \Mod_{Q'}(\mathcal{D}(R_0, R)) =0 .  
\end{equation*}
To do this, we consider the integer $N \in \N$ defined by the property that $2^N R_0 \geq R >2^{N-1}R_0$. Note, that 
if $R \to \infty$, then $N\to \infty$. Consider the density 
\begin{equation*}
    \rho_N(x)=\left\{\begin{matrix}
        \frac{3}{N}\cdot \frac{1}{d_X(x_0, x)} & &\text{ if }\; x \in \mathcal{D}(R_0, R)\\
        \\
        0& &\text{otherwise}.
    \end{matrix}\right.
\end{equation*}
Let us check that the $\rho_N$ is an admissible density for the curve family $\Gamma$ connecting $\partial B(x_0, R_0)$ and $\partial B(x_0, R)$. To do so we consider the integers $1< k < N$ and denote by $B_k= B(x_0, 2^k R_0)$ and $D_k= B_k \setminus B_{k-1}$. For $\gamma \in \Gamma$ denote by 
$\gamma_k = D_k \cap \gamma$. By this notation, we observe that the length  of $\gamma_k$, $\ell_X(\gamma_k) \geq 2^{k-1} R_0$ and if $x\in \gamma_k$, then $\rho_N(x) \geq \frac{3}{N}\cdot \frac{1}{2^kR_0}$. 
Using this information we can write
$$ \int_{\gamma} \rho_N\,d\ell_X \geq \sum_{k=2}^{N-1} \int_{\gamma_k} \rho_N \,d\ell_X\geq \sum_{k=2}^{N-1} \frac{3}{N}\cdot \frac{1}{2^kR_0} \ell(\gamma_k) \geq \frac{3(N-2)}{2N} \geq1, $$
if $N\geq6$.
Note, that by our assumption on the upper of the measure \eqref{measure-growth} we have that $\mu_X(B_k) \leq K 2^{kQ}R_0^Q$. Using this upper estimate on the measure of $B_k$, the assumption $Q'\geq Q$ and the fact that for $x\in B_k$ we have $\rho(x) \leq \frac{3}{N}\frac{1}{2^{k-1} R_0}$, we can estimate 
\begin{align*}
& \Mod_{Q'}\mathcal{D}(R_0, R)  \leq \int_{\mathcal{D}(R_0, R)} \rho_N^{Q'} d\mu_X \leq \sum_{k=1}^N \int_{D_k} \rho_N^{Q'} d\mu_X \leq \sum_{k=1}^N \int_{B_k} \left(\frac{3}{N}\frac{1}{2^{k-1} R_0}\right)^{Q'} d\mu_X= \\ & = \sum_{k=1}^N \left(\frac{3}{N}\frac{1}{2^{k-1} R_0}\right)^{Q'} \mu_X(B_k) \leq K\left(\frac{6}{N}\right)^{Q'} R_0^{Q-Q'} \sum_{k=1}^N2^{k(Q-Q')} \to 0  \ \ \text{as} \ \ N \to \infty,
\end{align*}
 Since $R\to \infty $  implies that $N \to \infty $ we obtain the statement. 
\end{proof}

As expected, our next statement is a formulation of the fact that a parabolic metric measure space cannot be quasiconformally equivalent to an hyperbolic one. In order to formulate the statement we recall that a metric space is proper, if its closed metric balls are compact. 

\begin{thm} Let $Q>1$ and let $(X, d_X, \mu_X)$, $(X', d_{X'}, \mu_{X'})$ be two locally Ahlfors, $Q$-regular metric measure spaces. Assume that both spaces are proper and  $(X, d_X, \mu_X)$ 
 is hyperbolic and  $(X', d_{X'}, \mu_{X'})$ is a parabolic space.  Then there is no QC map $f: X \to X'$.  
\end{thm}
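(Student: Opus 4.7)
The plan is to argue by contradiction: suppose a quasiconformal map $f: X \to X'$ exists. Fix $x_0 \in X$, $R_0>0$, and set $x_0' := f(x_0)$. Since $X$ is hyperbolic, the negation of \eqref{conf-par} reads
$$\limsup_{R \to \infty} \Mod_Q(\mathcal{D}(R_0, R)) > 0.$$
The strategy is to transport the ring domain family from $X$ to $X'$ via $f$, exploit the quasi-invariance of the $Q$-modulus \eqref{mod-qc} together with the subcurve bound \eqref{subcurve-mod}, and finally invoke the $Q$-parabolicity of $X'$ to force the limsup to vanish, contradicting hyperbolicity.

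The crucial step, and the place where properness enters, is to verify that $f$ is \emph{proper} in the metric sense: if $d_X(x_0, x_n) \to \infty$, then $d_{X'}(x_0', f(x_n)) \to \infty$. Indeed, were $\{f(x_n)\}$ to remain in some closed ball of $X'$, it would be precompact by properness of $X'$, whereupon continuity of $f^{-1}$ would furnish a convergent subsequence of $\{x_n\}$, contradicting $d_X(x_0, x_n) \to \infty$. Applying the same reasoning to $f^{-1}$, the compact set $f(\partial B(x_0, R_0))$ sits inside some ball $B(x_0', R_0')$, and for every $R > R_0$ the quantity
$$R'(R) := \inf\{d_{X'}(x_0', f(x)) : d_X(x_0, x) \geq R\}$$
is finite and satisfies $R'(R) \to \infty$ as $R \to \infty$.

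With these preparations, let $\Gamma$ denote the family of rectifiable curves connecting $\partial B(x_0, R_0)$ and $\partial B(x_0, R)$ in $X$, and choose $R$ large enough so that $R'(R) > R_0'$. Any $\gamma \in \Gamma$ maps under $f$ to a curve with one endpoint in $\overline{B}(x_0', R_0')$ and the other outside $B(x_0', R'(R))$; therefore $f(\gamma)$ contains a subcurve joining $\partial B(x_0', R_0')$ and $\partial B(x_0', R'(R))$, i.e., a subcurve in $\mathcal{D}(R_0', R'(R))$. Combining \eqref{subcurve-mod} applied to $f(\Gamma)$ with the quasi-invariance \eqref{mod-qc}, we obtain
$$\Mod_Q(\mathcal{D}(R_0, R)) \leq H\, \Mod_Q(f(\Gamma)) \leq H\, \Mod_Q(\mathcal{D}(R_0', R'(R))).$$
Letting $R \to \infty$, the right-hand side tends to $0$ by $Q$-parabolicity of $X'$, while the left-hand side has positive limsup by hyperbolicity of $X$; this is the desired contradiction. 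The main obstacle is precisely the properness argument in the middle paragraph, as it is this step that guarantees the image family is eventually separated from $x_0'$ by arbitrarily large radii; everything else is routine bookkeeping with the modulus inequalities already established in the preliminaries.
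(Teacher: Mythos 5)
Your proposal is correct and follows essentially the same route as the paper: contradiction via the chain $\Mod_Q(\mathcal{D}(R_0,R))\le H\,\Mod_Q(f(\Gamma))\le H\,\Mod_Q(\mathcal{D}(R_0',R'(R)))$, with properness of both spaces used exactly as in the paper to show that images of points escaping to infinity in $X$ escape to infinity in $X'$ (your argument via precompactness and continuity of $f^{-1}$ is a slightly cleaner phrasing of the paper's injectivity argument, but the same idea). No gaps.
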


\begin{proof}
Assume by contradiction that there is a QC map $f:X \to X'$. Since $(X, d_X, \mu_X)$ is assumed to be hyperbolic, there exist a point $x_0 \in X$,
$R_0 >0$, a sequence $R_n \to \infty$, and a number $M>0$ such that 
$$ \Mod_Q(\Gamma_n) \geq M>0,\, n\geq n_0,$$
where $\Gamma_n$ is the set of curves connecting $\partial B_X(x_0, R_0)$ and 
$\partial B_X(x_0, R_n)$. By the relation \eqref{mod-qc} there exists $H\geq 1$ such that 
$$ \Mod_Q (f(\Gamma_n)) \geq \frac{\Mod_Q(\Gamma_n)}{H} \geq \frac{M}{H}>0.$$
Let us denote by $y_0 = f(x_0) \in X'$. Since $X$ is proper, $\bar{B}_X(x_0, R_0)$ is compact and thus $f(B_X(x_0, R_0))$ is bounded in $X'$. We conclude that there exists a number $R_0'>0$ such that $ f(B_X(x_0, R_0)) \subseteq B_{X'}(y_0, R_0')$. Let us denote by 
$$ R_n' := \min \{ d_{X'}(f(x_0), f(x)): x \in \partial B_X(x_0, R_n) \}.$$

We claim that $R_n' \to \infty$. For otherwise, we find a sequence $x_n\in X$ with $d_X(x_0, x_n) = R_n$ such that $d_X'(f(x_0), f(x_n)) \leq M'$ for some fixed constant $M'>0$. Since the space $X'$ is a proper metric space, we obtain that (up to a subsequence) $f(x_n) \to y$ for some $y\in X'$. 
Let us denote by $x_1 = f^{-1}(y) \in X$ the preimage of $y$. Since $f$ is a homeomorphism we have that $f(B_X(x_1, r))$ is a neighborhood of $y\in X'$ for 
any fixed $r>0$. Since $f(x_n) \to y$ we must have that for $n$ large enough $f(x_n) \in f(B_X(x_1, r))$, which is a contradiction to the injectivity of $f$.

Let us note that any curve in $f(\Gamma_n)$ has a sub-curve connecting $\partial B_{X'}(y_0, R_0')$ and $\partial B_{X'}(y_0, R_n')$. This implies by \eqref{subcurve-mod} that 
$$ \Mod_Q(D(R_0', R_n'))\geq \Mod_Q(f(\Gamma_n)) \geq \frac{M}{H},$$
which is a contradiction to the parabolicity of $(X', d_{X'}, \mu_{X'})$, concluding the proof.  
\end{proof}

The metric spaces considered in this paper are 3-dimensional Lie groups $\bG$ with group multiplication $\star$. We shall assume that $\bG$ is equipped with a left-invariant contact form $\vartheta_\bG$. Using this contact form we define a left-invariant sub-Riemannian metric on $\bG$ as follows. 

The kernel $\ker\vartheta_\bG =\mathcal{H}_\bG$ is a two dimensional subbundle of the tangent bundle ${\rm T}_\bG$. If $X$ and $Y$ are left-invariant vector fields such that $\mathcal{H}_\bG={\rm span}\{X,Y\}$ then 
 a left-invariant sub-Riemannian metric $\langle\cdot,\cdot\rangle_\bG$ is considered in $\mathcal{H}_\bG$, making  $\{X,Y\}$ an orthonormal basis of $\mathcal{H}_\bG$. 

A curve $\gamma:[a,b]\to\bG$ , $\gamma=\gamma(s)$ shall be called horizontal if $\dot{\gamma}(s)\in\ker(\vartheta_{\bG})_{\gamma(s)}$ for almost every $s\in[a,b]$. Then, the horizontal velocity of $\gamma$ is 
$$|\dot\gamma(s)|_\bG=\sqrt{\langle\dot\gamma(s),X_{\gamma(s)}\rangle_{\bG}^2+\langle\dot\gamma(s),Y_{\gamma(s)}\rangle_{\bG}^2}.$$
The horizontal length of $\gamma$ is $$\ell_\bG(\gamma)=\int_a^b|\dot\gamma(s)|_\bG\,ds.$$
The corresponding sub-Riemannian or Carnot-Carath\'eodory distance $d_{\bG}$ associated to the sub-Riemannian metric $\langle\cdot,\cdot\rangle_\bG$ is defined in $\bG$ as follows: let $p, q  \in\bG$ and cosider the family $\Gamma_\bG(p,q)$ of horizontal curves $\gamma:[a,b]\to\bG$ such that $\gamma(a)=p$ and $\gamma(b)=q$. Then
\begin{equation}\label{CC distance}
    d_{\bG}(p,q)=\inf_{\gamma\in \Gamma_\bG(p,q)}\{\ell_\bG(\gamma) \},
\end{equation}
We remark that the above definition only depends on the values of $\langle\cdot,\cdot\rangle_\bG$ on $\mathcal{H}_\bG$.
Moreover, since $\mathcal{H}_\bG$ is completely non integrable, the distance $d_\bG$ is finite, geodesic, and
induces the manifold topology (see e.g. \cite{M}). This will make the space $(X, d_X) = (\bG, d_{\bG})$ a metric space. We consider the measure $\mu_X= \mu_{\bG}$ induced by the contact form $\vartheta_\bG$ by $ \mu_{\bG}= \vartheta_\bG\wedge d\vartheta_\bG$ (up to a multiplicative constant different from $0$) that is also left-invariant and gives our metric measure space $(\bG, d_{\bG}, \mu_{\bG})$. 

A well-known example of such a structure is  the \textit{first Heisenberg group}  $\H$. Its underlying manifold is $\C\times\R$ with coordinates $(z=x+iy,t)$ and the group multiplication $\star$ is given by 
$$p'\star p=\left(z'+z,t'+t+2\Im(\overline{z}'z)\right)$$
for every $p=(z,t)$ and $p'=(z',t')$ in $\C\times\R$.  

The \textit{contact form} of $\H$ is given by:
$$\vartheta_\H=dt+2\Im(\overline{z}dz)=dt+2(xdy-ydx).$$
The horizontal bundle $\mathcal{H}_\H$ of the tangent bundle is spanned by the vector fields
$$X=\partial_x+2y\partial_t,\;Y=\partial_y-2x\partial_t\,.$$
Denote the sub-Riemannian metric in $\H$ by $\langle\cdot,\cdot\rangle_\H$ making $\{X,Y\}$ an orthonormal frame. The horizontal length of a curve $\gamma=\gamma(s)$, $s\in[a,b]$, $\gamma(s)=(z(s),t(s))$ is 
$$
\ell_\H(\gamma)=\int_a^b |\dot z(s)| \, ds.
$$ 
Denote also the corresponding
Carnot-Carath\'eodory distance by $d_\H$.
The measure $\mu_\H$ is the bi-invariant Haar measure for $\H$ and it coincides with  the 3-dimensional Lebesgue measure in $\C\times\R$ denoted with $\mathcal{L}^3$. It turns out that the $(\H, d_{\H}, \mu_{\H})$ is a parabolic, $4$-Ahlfors regular metric measure space. It follows from Proposition \ref{Q-par-prop} that the metric measure space 
$(\H, d_{\H}, \mu_{\H})$ is 4-parabolic. We note, that there is an elaborate theory of QC maps on the Heisenberg group (see e.g. \cite{Pansu}, \cite{KR1}, \cite{KR2}, \cite{CT}, \cite{BFP}, \cite{P}). It is therefore of interest to identify those sub-Riemannian Lie groups that are QC equivalent to the Heisenberg group.

The second example is the \textit{roto-translation group}  $\RT$ (see Chapter 3 in \cite{CDPT} and \cite{FKL}). Its underlying manifold is $\C\times\R$ with coordinates $p=(z=x+iy,t)$ and the group multiplication $\ast$ is given by 
$$p'\star p=\left(e^{it'}z+z',t'+t\right)\in\C\times\R$$
for every $p=(z,t)$ and $p'=(z',t')$ in $\C\times\R$.

The \textit{contact form} of $\RT$ is given by: 
$$\vartheta_\RT=\sin t\,dx-\cos t\,dy.$$
The horizontal bundle $\mathcal{H}_\RT$ of the tangent bundle is spanned by the vector fields

$$X=\cos t\, \partial_x+ \sin t\, \partial_y,\;Y=\partial_t\,.$$

Denote the sub-Riemannian metric in $\RT$ by $\langle\cdot,\cdot\rangle_\RT$ making $\{X,Y\}$ an orthonormal frame. The horizontal length of a curve $\gamma=\gamma(s)$, $s\in[a,b]$, $\gamma(s)=(z(s),t(s))$ is 
$$
\ell_\RT(\gamma)=\int_a^b |\dot \gamma(s)|_\RT \, ds,
$$ 
where
$$
|\dot \gamma(s)|_\RT=\sqrt{(\dot x(s)\cos t(s)+\dot y(s)\sin t(s))^2+\dot t(s)^2}.
$$
Denote also the corresponding
Carnot-Carath\'eodory distance by $d_\RT$.
The measure $\mu_{\RT}$ is again the bi-invariant Haar measure of $\RT$ and it is the 3-dimensional Lebesgue measure in $\C\times\R$.

Later on we will need the following (Lemma 5.5 in \cite{FKL}):
\begin{prop}\label{RT contact equiv to H}
   The manifolds $(\RT , \vartheta_\RT)$ and $(\H, \vartheta_\H)$ are globally contactomorphic: i.e.,
there is a diffeomorphism $f:\RT\to\H$ such that $$f^*\vartheta_\H=\sigma \,\vartheta_\RT,$$ where $\sigma:\RT\to\R$ is a nowhere vanishing smooth function.
\end{prop}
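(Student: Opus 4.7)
The plan is to construct an explicit global contactomorphism $f:\RT\to\H$. Darboux's theorem already yields a local contactomorphism between any two contact 3-manifolds, so the real content of the proposition lies in promoting this to a global diffeomorphism. Since the underlying manifolds of both $\RT$ and $\H$ are diffeomorphic to $\R^3$, there is no topological obstruction, which makes an explicit formula feasible.

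My first step would be to choose an ansatz. Rewriting $\vartheta_\RT=\sin t\,dx-\cos t\,dy=-\Im\bigl(e^{-it}(dx+i\,dy)\bigr)$ and, with $Z=X+iY$, $\vartheta_\H=dT+2\,\Im(\overline{Z}\,dZ)$, a natural attempt is $f(x,y,t)=(X,Y,T)$ with $Z=a(t)(x+iy)+b(t)(x-iy)+c(t)$ and $T$ polynomial in $(x,y)$ with $t$-dependent coefficients. Expanding $f^*\vartheta_\H=dT+2\,\Im(\overline{Z}\,dZ)$ in the basis $\{dx,dy,dt\}$ and equating the result to $\sigma\,\vartheta_\RT$ gives three conditions: vanishing of the $dt$-component determines $T$ up to an additive function of $(x,y)$, while proportionality of the $dx,dy$ components to $(\sin t,-\cos t)$ forces specific ODEs on $a,b,c$ and on the $(x,y)$-dependent part of $T$. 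Solving these ODEs would yield a candidate formula, after which one verifies directly that $f$ is a smooth diffeomorphism of $\R^3$ with smooth inverse and that the resulting $\sigma$ is nowhere zero.

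The main obstacle, and the reason a naive ansatz like $Z=e^{-it}(x+iy)$ fails, is that the periodic behaviour of $\sin t,\cos t$ in $\vartheta_\RT$ must be reconciled with the linear, non-periodic dependence of $\vartheta_\H$ on its vertical variable. This typically forces $a,b,c$ to involve half-angles or $\tan(t/2)$ so as to unwind the rotation globally without introducing singularities, and imposes nontrivial compatibility between the polynomial $T$ and the complex ansatz for $Z$. An abstract alternative would be to apply Gray's stability theorem to a 1-parameter family of contact forms on $\R^3$ interpolating between $\vartheta_\RT$ and a pullback of $\vartheta_\H$, and to integrate the corresponding time-dependent Gray vector field; but the standard version of Gray requires compactness, so any such argument must carefully control the integrated flow at infinity, at least as delicate as producing the explicit formula. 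Consequently the direct construction is the natural route, and the detailed computation is carried out in Lemma~5.5 of \cite{FKL}.
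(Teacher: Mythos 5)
The first thing to note is that the paper does not prove this proposition at all: it is imported verbatim as Lemma~5.5 of \cite{FKL}, stated without argument. So there is no internal proof to compare your attempt against, and your closing deferral to ``Lemma~5.5 of \cite{FKL}'' is, in effect, exactly what the paper itself does.

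Judged as a standalone proof, however, your proposal has a genuine gap: it is a plan, not an argument. The decisive content of the proposition is the existence of an explicit global diffeomorphism $f:\RT\to\H$ with $f^*\vartheta_\H=\sigma\,\vartheta_\RT$, and you never produce one. The ansatz $Z=a(t)(x+iy)+b(t)(x-iy)+c(t)$ with $T$ polynomial in $(x,y)$ leads to a coupled system (vanishing of the $dt$-component, proportionality of the $(dx,dy)$-components to $(\sin t,-\cos t)$ with a nowhere-vanishing factor), and you neither solve that system nor show it is solvable within the ansatz; the remark that half-angles or $\tan(t/2)$ ``typically'' enter is speculation, and introducing $\tan(t/2)$ would in fact threaten smoothness and global injectivity at $t\in\pi+2\pi\mathbb{Z}$, which is precisely the kind of obstruction the proof must rule out. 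The correct framing of your observations (no topological obstruction, Darboux gives only local equivalence, Gray stability needs compactness) does not substitute for exhibiting the map and verifying that it is a diffeomorphism of $\R^3$ with $\sigma$ nowhere zero. If the intent is to mirror the paper, a bare citation suffices; if the intent is to prove the statement, the computation you outline must actually be carried out to completion.
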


We will also need (Corollary 5.9 in \cite{FKL}): 
\begin{prop}\label{large scale RT}
There exists $R_0 > 0$, and $C_0 > 0$ such that if $B_{\mathcal{RT}}(e_{\mathcal{RT}},r)$ is the open $CC$-ball of centre $e_{\mathcal{RT}}$ and radius $r$ then:
\begin{equation}\label{RT - N=3}
    \mathcal{L}^3(B_{\mathcal{RT}}(e_{\mathcal{RT}},r))\leq C_0 r^3, \text{ for all } r\geq R_0.
\end{equation}
\end{prop}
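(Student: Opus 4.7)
The plan is to reduce the measure estimate to a straightforward Euclidean volume computation. The key observation is that the horizontal frame $\{X,Y\}=\{\cos t\,\partial_x+\sin t\,\partial_y,\,\partial_t\}$ of $\RT$ gives a particularly transparent expression for the coordinate derivatives of any horizontal curve. I will exploit this to confine the CC-ball $B_\RT(e_\RT,r)$ inside a Euclidean solid cylinder of volume comparable to $r^3$, from which the proposition will follow immediately.

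Concretely, any horizontal curve $\gamma(s)=(x(s),y(s),t(s))$ can be written $\dot\gamma(s)=a(s)X_{\gamma(s)}+b(s)Y_{\gamma(s)}$ for measurable coefficients $a,b$, i.e., in coordinates
\begin{equation*}
(\dot x(s),\dot y(s),\dot t(s))\;=\;(a(s)\cos t(s),\;a(s)\sin t(s),\;b(s)).
\end{equation*}
Parametrising by arc length, $|\dot\gamma|_\RT^2=a^2+b^2=1$, and hence
\begin{equation*}
|\dot t(s)|=|b(s)|\leq 1,\qquad \sqrt{\dot x(s)^2+\dot y(s)^2}=|a(s)|\leq 1.
\end{equation*}
Because $(\RT,d_\RT)$ is a geodesic metric space with identity $e_\RT=(0,0,0)$, every $p\in B_\RT(e_\RT,r)$ is the endpoint of an arc-length parametrised horizontal geodesic of length $L<r$ starting at $e_\RT$. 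Integrating the two pointwise bounds above from $0$ to $L$ yields $|t(L)|\leq r$ and $\sqrt{x(L)^2+y(L)^2}\leq \int_0^L|a(s)|\,ds\leq r$.

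Consequently,
\begin{equation*}
B_\RT(e_\RT,r)\;\subseteq\;\bigl\{(x,y,t)\in\C\times\R:\,x^2+y^2\leq r^2,\;|t|\leq r\bigr\},
\end{equation*}
and the right-hand side has $3$-dimensional Lebesgue measure $2\pi r^3$. This gives the stated bound with $C_0=2\pi$ for every $r>0$, so any choice of $R_0>0$ works. There is no real obstacle: the entire argument rests on the factorisation $(\dot x,\dot y)=a(\cos t,\sin t)$, which forces the Euclidean planar speed to coincide with $|a|$ and therefore to be controlled by the CC-speed. This is the geometric reason why $\RT$ does not develop the Heisenberg-type $r^4$ volume growth at infinity, and it is precisely what will allow Proposition~\ref{Q-par-prop} (with $Q=3$ and $Q'=4$) to conclude $4$-parabolicity of $\RT$.
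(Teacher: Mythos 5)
Your argument is correct and complete. Note that the paper itself offers no proof of this statement: it is imported verbatim as Corollary 5.9 of the cited work of F\"assler, Koskela and Le Donne, so there is no internal argument to compare against. Your containment $B_\RT(e_\RT,r)\subseteq\{x^2+y^2\le r^2\}\times[-r,r]$ is valid: for any horizontal curve one has $\dot x\cos t+\dot y\sin t=a$ and $\dot x\sin t-\dot y\cos t=0$, hence $\sqrt{\dot x^2+\dot y^2}=|a|\le|\dot\gamma|_\RT$ and $|\dot t|=|b|\le|\dot\gamma|_\RT$, and integrating gives that both the Euclidean planar displacement and the $t$-displacement of the endpoint are bounded by the horizontal length; taking a curve of length less than $r$ (which exists by the definition of $d_\RT$ as an infimum, so you do not even need existence of geodesics) yields the inclusion. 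This gives $\mathcal{L}^3(B_\RT(e_\RT,r))\le 2\pi r^3$ for \emph{all} $r>0$, which is stronger than the stated asymptotic bound and identifies the explicit constant $C_0=2\pi$. The only cosmetic caveat is that the arc-length reparametrisation step is unnecessary and slightly delicate for curves whose horizontal speed vanishes on a set of positive measure; the cleaner route is the pointwise inequality $\sqrt{\dot x^2+\dot y^2}+0\cdot|\dot t|\le|\dot\gamma|_\RT$ integrated directly, as indicated above. Your closing remark correctly identifies why this feeds into Proposition \ref{Q-par-prop} with $Q=3$, $Q'=4$ to give $4$-parabolicity of $\RT$.
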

The remarkable result of F\"assler, Koskela and Le Donne states that in contrast to the fact that both spaces $(\H, d_{\H}, \mu_{\H}) $ and $(\RT, d_{\RT}, \mu_{\RT})$ are 4-parabolic and by \ref{RT contact equiv to H} locally bi-Lipschitz equivalent but they are still not QC equivalent (see Corollary 1.2 in \cite{FKL}).

\section{The affine-additive group as a metric measure space} \label{AA}
The main subject of this paper is the affine-additive group, which we describe below. In particular, after introducing the group, we discuss its sub-Riemannian structure. For more details about the affine-additive group, we refer to \cite{Bubani}.

Our starting point is the hyperbolic plane, defined as 
\begin{equation*}
    \bH_\C^1:=\{\zeta=\xi+i\eta\in\C:\xi>0\} \ \text{with the Riemannian metric }  
g=\frac{|d\zeta|^2}{4\xi^2}=\frac{d\xi^2+d\eta^2}{4\xi^2}.
\end{equation*}

We consider affine transformations on $\bH_\C^1$, composed by  dilations $D_\lambda$, $\lambda>0$, defined by
$
D_\lambda(\zeta)=\lambda\zeta,
$
and translations $T_t$, $t\in\R$, defined by $T_t(\zeta)=\zeta+it$, for $\zeta\in \bH_\C^1$ resulting in maps of the form
$$
M(\lambda,t)(\zeta)=(T_t\circ D_\lambda)(\zeta)=\lambda\zeta+it.
$$
It is clear that $\bH_\C^1$ is in bijection with the set of transformations of the above form: to each point $\xi+i\eta$ we uniquely assign the transformation $M(\xi,\eta)$. Therefore we define a group structure on $\bH_\C^1$ by considering the composition of any two transformations $M(\lambda',t')$ and $M(\lambda,t)$:
\begin{eqnarray*}
(M(\lambda',t')\circ M(\lambda,t))(\zeta) = M(\lambda',t')(\lambda\zeta+i t) =\lambda'\lambda\zeta+i(\lambda't+t')= M(\lambda'\lambda,\;\lambda't+t')(\zeta).
\end{eqnarray*}
To sum up, (compare to Section 4.4.2 in \cite{Petersen}) the group operation on $\bH_\C^1$ is given by
\begin{equation}\label{hyp-group}
    (\lambda',t')\cdot(\lambda,t)=(\lambda'\lambda,\;\lambda't+t').
\end{equation}

We wish to extend the previous construction over the space 
$
\R\times\bH_\C^1
$. 
We define the group operation as follows: if $p'=(a',\lambda',t')$ and  $p=(a,\lambda,t)$ are points of $\R\times\bH_\C^1$, then
\begin{equation}\label{eq-mult}
p'\star p=(a'+a,\lambda'\lambda,\lambda't+t'),
\end{equation}
which is again a point in $\R\times\bH_\C^1$. This group operation is the group operation of the Cartesian product of the additive group $(\R,+)$ and the group $(\bH_\C^1,\cdot)$, where $\cdot$  is as in \eqref{hyp-group}.
\begin{defn}
The pair $\Aa=(\R\times\bH_\C^1,\;\star)$ shall be called the affine-additive group.
\end{defn}

We define a $1$-form on $\Aa$ as follows: 
\begin{equation}\label{contact form omega}
\vartheta=\frac{dt}{2\lambda}-da.    
\end{equation}
Since $d\vartheta=\frac{1}{2\lambda^2}dt\wedge d\lambda$ we obtain 
$\vartheta\wedge d\vartheta=\frac{dt\wedge da\wedge d\lambda}{2\lambda^2}$
and thus $(\Aa,\vartheta)$  is a contact manifold.  In what follows we identify the left invariant vector fields and define a left invariant sub-Riemannian metric on the group $\Aa$. 

\begin{prop}
The vector fields
\begin{equation}\label{LIVF} U =\partial_a+2\lambda\partial_t,\nonumber  \  V =2\lambda\partial_\lambda, \ W =-\partial_a\nonumber
\end{equation}
are left-invariant and form a basis for the  
tangent bundle $T(\Aa)$ of $\Aa$. They satisfy the following Lie bracket relations:
\begin{equation}\label{livf_rels}
    [U,W]=[V,W]=0 \;\text{ and }\; [U,V]=-2(U+W);
\end{equation} 
Moreover, the left-invariant measure for $\Aa$ is
$
d\mu_\Aa=\frac{da\, d\lambda\, dt}{\lambda^2}.
$
\end{prop}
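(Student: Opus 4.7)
The proposition decomposes into three independent computations in the global coordinates $(a, \lambda, t)$: left-invariance and the basis property of $\{U, V, W\}$, the Lie-bracket relations, and the formula for the Haar measure. The plan is to reduce each to a direct verification using the polynomial group law $p'\star p = (a'+a, \lambda'\lambda, \lambda't+t')$. I do not anticipate any substantive obstacle; the whole proposition is a bookkeeping exercise, and the only points needing minor care are the sign in $[U,V]$ and the correct orientation in the volume form.

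First I would record the Jacobian of the left-translation $L_{p'}$ in the chosen coordinates as $\mathrm{diag}(1, \lambda', \lambda')$, which is immediate because each output coordinate depends on exactly one input coordinate. Pushing forward the vector $U_q = \partial_a + 2\lambda\, \partial_t$ then produces $\partial_a + 2\lambda'\lambda\, \partial_t$, and this matches $U$ evaluated at $p'\star q$ because the $\lambda$-coordinate of $p'\star q$ equals $\lambda'\lambda$; the analogous cancellation handles $V$, while $W$ is left-invariant trivially. At each point the coordinate matrix of $\{U, V, W\}$ has nonzero determinant (proportional to $\lambda$, which is positive on $\Aa$), so the three vector fields form a basis of $T(\Aa)$.

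Next, I would compute the Lie brackets using $[X, Y]^k = X^i \partial_i Y^k - Y^i \partial_i X^k$. Since neither $U$ nor $V$ has any coefficient depending on $a$, and $W = -\partial_a$ has constant components, both $[U, W]$ and $[V, W]$ vanish term by term. The only surviving contribution to $[U, V]$ is the action of $V^\lambda \partial_\lambda$ on $U^t = 2\lambda$, which gives $[U, V] = -4\lambda\, \partial_t$; comparing with $-2(U + W) = -2(2\lambda\, \partial_t) = -4\lambda\, \partial_t$ finishes this identity.

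For the Haar measure I would first verify that the contact form $\vartheta$ is itself left-invariant by pulling back through $L_{p'}$: since $L_{p'}^* dt = \lambda'\, dt$, $L_{p'}^* d\lambda = \lambda'\, d\lambda$, and $L_{p'}^* da = da$, the factor $\lambda'$ cancels in $\frac{dt}{2\lambda} - da$. Consequently the volume form $\vartheta \wedge d\vartheta$ is automatically left-invariant, and the direct calculation $d\vartheta = \frac{1}{2\lambda^2}\, dt \wedge d\lambda$ followed by wedging yields $\vartheta \wedge d\vartheta = \frac{1}{2\lambda^2}\, da \wedge d\lambda \wedge dt$, in agreement with $d\mu_\Aa = \frac{da\, d\lambda\, dt}{\lambda^2}$ up to the multiplicative constant allowed in the statement. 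As a cross-check, one could look for a left-invariant density $f(a,\lambda,t)$ directly: the Jacobian $\lambda'^2$ of $L_{p'}$ in the change-of-variables formula, combined with evaluation at the identity, forces $f(a,\lambda,t) \propto \lambda^{-2}$.
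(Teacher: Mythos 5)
Your proposal is correct and follows essentially the same route as the paper: both compute the Jacobian $\mathrm{diag}(1,\lambda',\lambda')$ of the left translation $L_{p'}$ and push the frame forward to verify left-invariance (the paper checks the pushforward only from the identity, you from an arbitrary point, which amounts to the same thing). The paper explicitly leaves the Lie-bracket relations and the left-invariance of $\mu_\Aa$ to the reader, so your computations of $[U,V]=-4\lambda\,\partial_t=-2(U+W)$ and of the invariance of $\vartheta\wedge d\vartheta$ simply supply the omitted details, and they check out.
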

\begin{proof}
By the definition of $U, V$ and $W$ we have the relations:
\begin{equation}\label{eq-basis}
\partial_a=-W,\quad\partial_\lambda=\frac{U}{2\lambda},\quad \partial_t=\frac{U+W}{2\lambda},
\end{equation}
and thus $\{U,V,W\}$ is a basis for $T(\Aa)$.
To verify that $U,V$ and $W$ are left-invariant, it suffices to consider their values  at $e=e_\Aa=(0,1, 0)$. For this, let 
\begin{equation*}
    \{U_e=(\partial_a+2\partial_t)_{|e},\;V_e=(2\partial_\lambda)_{|e},\; W_e=(-\partial_a)_{|e}\}
\end{equation*}
be the basis for $T_e(\Aa)$. 
If we fix a point $p'=(a',\lambda',t')\in\Aa$ we can consider the left translation on $\Aa$ given by
\begin{equation}\label{eq-lefttr}
    L_{p'}(p)=p'\star p=(a'+a,\lambda'\lambda,\lambda't+t'),
\end{equation} 
the  Jacobian matrix of the derivative $(L_{p'})_{*,p}$ of $L_{p'}$ evaluated at $p$ is
\begin{equation*}
(DL_{p'})_p=\left[\begin{array}{ccc}
     1&0 &0 \\
     0&\lambda' &0 \\
     0&0 & \lambda'
\end{array}\right].
\end{equation*}

Then by $(L_p)_{*,e}:T_e(\Aa)\rightarrow T_p(\Aa)$ we have 
\begin{equation}
    (L_{p})_{*,e}\,(\partial_a+2\partial_t)_{|e}=U_p,\
    (L_{p})_{*,e}\,(2\partial_\lambda)_{|e}=V_p,\
    (L_{p})_{*,e}\,(-\partial_a)_{|e}=W_p,
\end{equation}
proving the first claim. 
 The verification of the Lie bracket relations and the left-invariance of $\mu_{\Aa}$ are left to the reader. 
\end{proof}
 Note, that $\vartheta(U)=\vartheta(V)=0$ and thus, the horizontal bundle of $\Aa$ is $\mathcal{H}_{\Aa}=\text{Span}\{U,V\}$.
The sub-Riemannian structure in $\Aa$ is defined by a sub-Riemannian metric on $\mathcal{H}_{\Aa}$ making $\{ U, V\})$ an orthonormal basis. 
In order to define the sub-Riemannian or Carnot-Carath\'eodory distance on $\Aa$ let $\gamma:[0,1]\rightarrow\Aa$, $\gamma(s)=(a(s),\lambda(s),t(s))$ be a (piecewise) $C^1$ curve. Its tangent vector at $\gamma(s)$ is 
\begin{equation*}
    \dot\gamma(s)=\frac{\dot t(s)}{2\lambda(s)}U_{\gamma(s)}+\frac{\dot \lambda(s)}{2\lambda(s)}V_{\gamma(s)}+\left(\frac{\dot t(s)}{2\lambda(s)}-\dot a(s)\right)W_{\gamma(s)}.
\end{equation*}
The curve $\gamma$ is a horizontal curve if and only if $\dot\gamma(s)\in\ker(\vartheta)_{\gamma(s)}$ for almost every $s\in[0,1]$. This is equivalent to the ODE
\begin{equation}
    \frac{\dot t(s)}{2\lambda(s)}-\dot a(s)=0, \text{ a.e. } s\in[0,1].
\end{equation}
It follows that for a horizontal curve
$$
\dot\gamma(s)=\frac{\dot t(s)}{2\lambda(s)}U_{\gamma(s)}+\frac{\dot \lambda(s)}{2\lambda(s)}V_{\gamma(s)}\in({\mathcal H}_{\Aa})_{\gamma(s)}.
$$
The horizontal velocity $|\dot\gamma|_H$ of $\gamma$ is now defined by the relation
\begin{equation}\label{horiz speed}
|\dot\gamma|_H=\left(\langle\dot\gamma,U\rangle_{\Aa}^2+\langle\dot\gamma,V\rangle_{\Aa}^2\right)^{1/2}=\frac{\sqrt{\dot \lambda^2+\dot t^2}}{2\lambda}.
\end{equation}
Here, $\langle\cdot,\cdot\rangle_{\Aa}$ is the sub-Riemannian metric on $\mathcal{H}_{\Aa}$. Let $\pi:\Aa\to\bH^1_\C$ denote the canonical projection given by $\pi(a,\lambda,t)=(\lambda,t),\,(a,\lambda,t)\in\Aa$, the horizontal length of $\gamma$ is then given by
\begin{equation}
 \ell(\gamma)=\int_0^1 \frac{\sqrt{\dot \lambda^2+\dot t^2}}{2\lambda}ds=\ell_h(\pi\circ \gamma),
\end{equation}
where $\ell_h(\pi\circ \gamma)$ is the hyperbolic length of the projected curve $\pi\circ\gamma$ in $\bH^1_\C$. It is straightforward to prove that the horizontal length is invariant under left-translations. 

Conversely, if $\tilde\gamma$ is a $C^1$ curve in $\bH^1_\C$, $\tilde\gamma(s)=(\xi(s),\eta(s))$, $s\in[0,1]$, passing from a point $q_0=\gamma(s_0)$, then the curve $\gamma:[0,1]\to\Aa$ given by $\gamma(s)=(a(s),\lambda(s),t(s))$, where
\begin{equation*}
a(s)=\int_{s_0}^s\frac{t(u)}{2\lambda(u)}\;du+a_0,\quad \lambda(s)=\xi(s),\quad t(s)=\eta(s),
\end{equation*}
is a horizontal curve passing from a point $p_0=(a_0,q)$ in the fibre of $q$.

The corresponding Carnot-Carath\'eodory distance $d_{\Aa}$ associated to the sub-Riemannian metric $\langle\cdot,\cdot\rangle$ is defined for all $p, q  \in\Aa$ as follows:
\begin{equation}\label{CC-distance}
    d_{\Aa}(p,q)=\inf_{\gamma\in \Gamma_\Aa}\{\ell(\gamma) \},
\end{equation}
where $\Gamma$ is the following family of horizontal curves: 
$$
\Gamma_\Aa=\{\gamma\in C^1([0,1],\Aa):\; 
       \gamma(0)=p,\;
       \gamma(1)=q\}.
$$
We recall that the distance $d_{\Aa}$ is finite, geodesic and
induces the manifold topology. Our main object of study is the metric measure space $(\Aa, d_{\Aa}, \mu_{\Aa})$. 
It is well known, that by Darboux theorem each three dimensional contact manifold is locally contactomorphic to the Heisenberg group. Our next statement is a stronger, global version of this fact. 

\begin{prop}\label{global-cont AA Heis}
    The manifolds $(\Aa , \vartheta)$ and $(\H, \vartheta_\H)$ are globally contactomorphic. That is: there exists a smooth bijective map $g:\H \to \Aa$ such that $g^*\vartheta= \nu \vartheta_{\H}$ for some non-vanishing smooth function $\nu: \H \to \R$. In particular the metric spaces $(\H, d_{\H})$ and $(\Aa, d_{\Aa})$ are locally bi-Lipschitz equivalent. 
\end{prop}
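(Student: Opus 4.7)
My plan is to exhibit a completely explicit diffeomorphism $g\colon\H\to\Aa$, verify by a direct form computation that it preserves the contact structure up to a smooth nowhere-vanishing factor, and then extract local bi-Lipschitz equivalence from a compactness argument on the horizontal derivative. The main obstacle is guessing the correct $g$; once the right ansatz is found the remaining verifications are mechanical.

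To find $g$, I would impose the ansatz $\lambda=e^{2y}$ (which automatically forces $\lambda>0$ while defining a smooth bijection $\R^3\to\R\times(0,\infty)\times\R$ in that coordinate), together with $t=\tau+\beta(x,y)$ and $a=a(x,y)$. Matching $\frac{dt}{2\lambda}-da$ against a scalar multiple $\nu\vartheta_\H$ of $\vartheta_\H=d\tau+2x\,dy-2y\,dx$, the $d\tau$-coefficient immediately forces $\nu=\frac{1}{2\lambda}$, and matching the $dx,\,dy$ coefficients together with the compatibility $\beta_{xy}=\beta_{yx}$ pins down $a_x=e^{-2y}$, $a_y=-2xe^{-2y}$, $\beta_x=2-2y$, $\beta_y=-2x$. (The naive attempt $a=x$ fails at this integrability step, which is what forces the non-trivial $y$-dependence of $a$.) Integrating with zero integration constants yields
\[
g(x,y,\tau)=\bigl(xe^{-2y},\,e^{2y},\,\tau+2x(1-y)\bigr),\qquad g^{-1}(a,\lambda,t)=\bigl(a\lambda,\,\tfrac{1}{2}\log\lambda,\,t-a\lambda(2-\log\lambda)\bigr),
\]
both of which are manifestly smooth, so $g$ is a global diffeomorphism between $\H$ and $\Aa$.

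The second step is a routine computation: from $dt=d\tau+(2-2y)dx-2x\,dy$, $d\lambda=2e^{2y}dy$ and $da=e^{-2y}(dx-2x\,dy)$ one obtains, after a one-line cancellation,
\[
g^*\vartheta=\tfrac{e^{-2y}}{2}\bigl(d\tau-2y\,dx+2x\,dy\bigr)=\tfrac{1}{2\lambda}\vartheta_\H,
\]
so $\nu=\tfrac{1}{2\lambda}$ is the required smooth, positive, nowhere-vanishing function, and $g$ is a global contactomorphism.

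For the local bi-Lipschitz conclusion, the identity $g^*\vartheta=\nu\vartheta_\H$ with $\nu\neq 0$ implies that $Dg$ maps $\mathcal{H}_\H$ isomorphically onto $\mathcal{H}_\Aa$ at every point. On any compact $K\subset\H$ the operator norms of $Dg|_{\mathcal{H}_\H}$ and of $Dg^{-1}|_{\mathcal{H}_\Aa}$ measured with respect to the given sub-Riemannian inner products are continuous and hence bounded above and below by positive constants, so horizontal curves in $K$ are carried to horizontal curves in $g(K)$ with sub-Riemannian lengths differing only by a bounded multiplicative factor. Since both $d_\H$ and $d_\Aa$ are geodesic and induce the manifold topology, shrinking $K$ so that CC-minimisers between points of a smaller compact $K'\subset K$ remain inside $K$ converts the length comparison into the required bi-Lipschitz estimate $c\,d_\H(p,q)\le d_\Aa(g(p),g(q))\le C\,d_\H(p,q)$ for $p,q\in K'$.
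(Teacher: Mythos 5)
Your proposal is correct and follows essentially the same route as the paper: the paper also proves the statement by writing down one explicit global diffeomorphism (there $g(x,y,t)=(xe^{-y},\,e^{y},\,\tfrac12(t-2xy+4x))$, giving $g^*\vartheta=\tfrac{1}{4e^{y}}\vartheta_\H$) and verifying the contact property by a direct computation of $g^*\vartheta$. Your map $(x,y,\tau)\mapsto(xe^{-2y},e^{2y},\tau+2x(1-y))$ with $\nu=\tfrac{1}{2\lambda}$ checks out and is just a different valid choice of contactomorphism; your added derivation of the ansatz and the spelled-out bi-Lipschitz argument are fine but not needed beyond what the paper records.
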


\begin{proof}
We define the smooth contactomorphism $g:(\H,\vartheta_\H)\rightarrow(\Aa,\vartheta)$ explicitely by the formula
\begin{equation}\label{Heis contact to Aa}
    g(x,y,t)=
         \left(x e^{-y},\;
          e^{y},\;
         \frac{1}{2}(t-2xy+4x)
    \right) \text{ for } (x,y,t)\in\H.
\end{equation}
Clearly, $g$ is a smooth diffeomorphism between $\H$ and $\Aa$. Its inverse map $g^{-1}: \Aa \to \H$ is given by
\begin{equation*}
   g^{-1}(a,\lambda,t)=\left(a\lambda,\; 
          \ln \lambda,\;
          2t+2a\lambda(\ln \lambda -2)
    \right) \text{ for } \ (a,\lambda,t)\in\Aa. 
\end{equation*}
To check the contact property of $g$ we compute directly:
\begin{eqnarray*}
 g^*\vartheta&=&\frac{(1/2)dt-xdy-ydx+2dx}{2e^y}-e^{-y}dx+xe^{-y}dy= \frac{dt+2xdy-2ydx}{4e^y}= \frac{1}{4e^y}\vartheta_\H.
\end{eqnarray*}
\end{proof}

Combining Proposition \ref{RT contact equiv to H} and Proposition \ref{global-cont AA Heis} we deduce
\begin{prop}\label{RT AA H contact equiv}
    The manifolds $(\RT,\vartheta_\RT)$, $(\H,\vartheta_\H)$ and $(\Aa,\vartheta)$ are all globally contactomorphic to each other.
\end{prop}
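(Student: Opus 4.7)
The plan is to argue that \textquotedblleft globally contactomorphic\textquotedblright\ is an equivalence relation and then simply chain the two preceding results. Concretely, Proposition \ref{RT contact equiv to H} supplies a diffeomorphism $f:\RT\to\H$ with $f^*\vartheta_\H=\sigma\,\vartheta_\RT$ for a nowhere vanishing smooth function $\sigma$, and Proposition \ref{global-cont AA Heis} (via the inverse of the map $g$ constructed there) supplies a diffeomorphism $g^{-1}:\Aa\to\H$, or equivalently $g:\H\to\Aa$ with $g^*\vartheta=\nu\,\vartheta_\H$ for a nowhere vanishing smooth function $\nu$. Transitivity of global contactomorphism then reduces the claim to composing these two maps appropriately.

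My first step is to record the transitivity. If $\phi_1:M_1\to M_2$ satisfies $\phi_1^*\vartheta_2=\sigma_1\vartheta_1$ and $\phi_2:M_2\to M_3$ satisfies $\phi_2^*\vartheta_3=\sigma_2\vartheta_2$, then
\[
(\phi_2\circ\phi_1)^*\vartheta_3=\phi_1^*(\phi_2^*\vartheta_3)=\phi_1^*(\sigma_2\vartheta_2)=(\sigma_2\circ\phi_1)\,\phi_1^*\vartheta_2=(\sigma_2\circ\phi_1)\,\sigma_1\,\vartheta_1,
\]
and the conformal factor $(\sigma_2\circ\phi_1)\sigma_1$ is again a nowhere vanishing smooth function since $\sigma_1$ and $\sigma_2$ are. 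Symmetry is equally easy: if $\phi^*\vartheta'=\sigma\vartheta$ with $\sigma$ nowhere zero and $\phi$ a diffeomorphism, then $(\phi^{-1})^*\vartheta=(\sigma\circ\phi^{-1})^{-1}\vartheta'$, so the inverse of a global contactomorphism is a global contactomorphism.

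My second step is the actual composition. Using symmetry, Proposition \ref{global-cont AA Heis} gives a global contactomorphism $g^{-1}:\Aa\to\H$; chaining it with $f:\RT\to\H$ and its inverse produces, for instance, $h:=g\circ f:\RT\to\Aa$, a smooth diffeomorphism which by the computation above satisfies $h^*\vartheta=\tau\,\vartheta_\RT$ for some nowhere vanishing smooth $\tau$. This gives the global contactomorphism between $(\RT,\vartheta_\RT)$ and $(\Aa,\vartheta)$; combined with the two pre-existing contactomorphisms $\RT\leftrightarrow\H$ and $\H\leftrightarrow\Aa$, all three manifolds are pairwise globally contactomorphic.

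There is essentially no obstacle here: the only content beyond citing the two previous propositions is the one-line observation that the pullback of a contact form under a composition is a pointwise nonzero scalar multiple of the original contact form, which in turn follows from the chain-rule identity $(\phi_2\circ\phi_1)^*=\phi_1^*\circ\phi_2^*$ for differential forms. Thus the proof will fit in a few lines.
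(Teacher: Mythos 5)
Your proposal is correct and matches the paper's approach: the paper deduces this proposition by simply combining Proposition \ref{RT contact equiv to H} and Proposition \ref{global-cont AA Heis}, exactly the composition argument you carry out. You merely spell out the (correct) routine verification that global contactomorphism is symmetric and transitive, which the paper leaves implicit.
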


Another consequence of Proposition \ref{global-cont AA Heis} is the following: 
\begin{prop}\label{RT and AA loc Ahl 4-reg}
The metric measure space $(\Aa, d_{\Aa}, \mu_{\Aa})$ is locally Ahlfors $4$-regular.
\end{prop}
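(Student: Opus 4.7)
The strategy is to transfer the known Ahlfors $4$-regularity of the Heisenberg group $(\H,d_\H,\mu_\H)$ to $\Aa$ via the global contactomorphism $g:\H\to\Aa$ of Proposition \ref{global-cont AA Heis}. Since $g$ is only locally bi-Lipschitz (rather than bi-Lipschitz) we will only get a local statement; the two ingredients we need are (i) local bi-Lipschitz comparison of the distances $d_\H$ and $d_\Aa$, and (ii) local comparison of the measures $\mu_\H$ and $g^*\mu_\Aa$.

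For (i), Proposition \ref{global-cont AA Heis} already gives that for every compact set $V\subset\Aa$ there exists a compact neighbourhood $V'\subset\H$ of $g^{-1}(V)$ and a constant $L\geq 1$ such that
\begin{equation*}
L^{-1}\,d_\H(u,v)\le d_\Aa(g(u),g(v))\le L\,d_\H(u,v)\qquad\text{for all }u,v\in V'.
\end{equation*}
For (ii), I would compute the Euclidean Jacobian of the explicit map \eqref{Heis contact to Aa}: a direct calculation yields $|\det Dg|(x,y,t)=\tfrac12$, so that, since $\mu_\Aa=\lambda^{-2}\,da\,d\lambda\,dt$ and $\lambda\circ g=e^{y}$, one obtains
\begin{equation*}
g^*\mu_\Aa=\frac{1}{2e^{2y}}\,dx\,dy\,dt=\frac{1}{2e^{2y}}\,\mu_\H.
\end{equation*}
The density $\tfrac{1}{2e^{2y}}$ is smooth and strictly positive, hence bounded above and below by positive constants $c_1,C_1$ on the compact set $V'$.

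Combining the two, pick any $p\in V$ and any sufficiently small $r>0$ (small enough to keep $g^{-1}(B_{d_\Aa}(p,Lr))\subset V'$, which is possible uniformly in $p\in V$ by compactness). From the local bi-Lipschitz inequality,
\begin{equation*}
B_{d_\H}(g^{-1}(p),r/L)\;\subset\;g^{-1}\!\bigl(B_{d_\Aa}(p,r)\bigr)\;\subset\;B_{d_\H}(g^{-1}(p),Lr),
\end{equation*}
so that, using the density bound,
\begin{equation*}
c_1\,\mu_\H\!\bigl(B_{d_\H}(g^{-1}(p),r/L)\bigr)\;\le\;\mu_\Aa\!\bigl(B_{d_\Aa}(p,r)\bigr)\;\le\;C_1\,\mu_\H\!\bigl(B_{d_\H}(g^{-1}(p),Lr)\bigr).
\end{equation*}
The global Ahlfors $4$-regularity of $(\H,d_\H,\mu_\H)$ then sandwiches both sides between constant multiples of $r^4$, giving the local Ahlfors $4$-regularity of $(\Aa,d_\Aa,\mu_\Aa)$ on $V$. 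No serious obstacle is expected; the only point requiring care is to choose the radius threshold $r_0>0$ uniformly over $p\in V$, which follows from the compactness of $V$ and the continuity of $g^{-1}$.
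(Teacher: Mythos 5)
Your proof is correct and follows the same basic strategy as the paper: transfer the Ahlfors $4$-regularity of $(\H,d_\H,\mu_\H)$ through the global contactomorphism $g$ of Proposition \ref{global-cont AA Heis}, using the local bi-Lipschitz comparison of the distances together with the comparison of $g^*\mu_\Aa$ with $\mu_\H$ (your Jacobian computation $|\det Dg|=\tfrac12$ and the resulting density $\tfrac{1}{2e^{2y}}$ agree, up to the harmless normalization constants, with the paper's $g^*\mu_\Aa=\nu^2\mu_\H$ with $\nu=\tfrac{1}{4e^y}$). The one genuine difference is how uniformity in the base point is obtained. You run a compactness argument: fix a compact $V\subset\Aa$, enlarge $g^{-1}(V)$ to a compact $V'$, and extract constants $L$, $c_1$, $C_1$ and a threshold $r_0$ depending on $V$. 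The paper instead invokes the left-invariance of both $d_\Aa$ and $\mu_\Aa$ to reduce everything to balls centered at the identity $e_\Aa$, so the contactomorphism and the density bound only ever need to be controlled near the single point $e_\H=g^{-1}(e_\Aa)$. That trick buys a strictly stronger conclusion: a single $C\geq 1$ and $r_0>0$ valid for \emph{all} $p\in\Aa$, not just for $p$ ranging over a fixed compact set, which is more than the definition of local Ahlfors regularity demands (and your argument cannot deliver, since your density $\tfrac{1}{2e^{2y}}$ degenerates as $y\to\pm\infty$ without the left-invariance reduction). For the statement as literally formulated, both arguments suffice; the only point in your write-up deserving a little extra care is the claim that the two-sided bi-Lipschitz inequality holds for all pairs $u,v\in V'$ — for Carnot--Carath\'eodory distances one should either restrict to pairs with $d_\H(u,v)$ small (which is all you use) or enlarge $V'$ once more so that near-geodesics between points of $g^{-1}(V)$ stay inside it.
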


\begin{proof}
We are going to prove a stronger property for $(\Aa,d_\Aa,\mu_\Aa)$; actually, that there exist a $C\geq1$ and an $r_0>0$ such that 
\begin{equation}\label{strong loc 4 Alhfors reg}
    C^{-1}r^4\leq\mu_ \Aa(\overline{B}_{d_ \Aa}(p,r))\leq C r^4,
\end{equation}
for all $0<r\leq r_0$ and for all $p\in\Aa$.
Due to the left-invariance of both the sub-Riemannian distance $d_\Aa$ and  the  measure $\mu_\Aa$, it suffices to prove \eqref{strong loc 4 Alhfors reg} for balls $B_\Aa(e,r)$ centered at the neutral element $e=e_\Aa$.
We have that $(\H,\vartheta_\H)$ and $(\Aa,\vartheta)$ are globally contactomorphic thanks to Proposition \ref{global-cont AA Heis}, so let us consider the map 
$$
  g:(\H,d_\H,\L^3)\rightarrow(\Aa,d_\Aa,\mu_\Aa), \quad  g^*\vartheta=\nu \vartheta_\H,
$$
 given in \eqref{Heis contact to Aa}. Since $g(e_\H) = e$, where $e_\H=(0,0,0)$ is the neutral element of $\H$ and $g:\H\rightarrow \Aa$ is locally bi-Lipschitz, we have the inclusions 
$$ g(B_{\H} (e_\H, L^{-1}r)) \subseteq B_{\Aa}(e, r) \subseteq g(B_{\H} (e_\H, Lr))$$
for some fixed number $L \geq 1$ and any $0 \leq r\leq 1$.
Since $g^*\mu_{\Aa}= \nu^2 \mu_{\H}= \nu^2 \L^3$ (up to multiplicative constants different from $0$) and $\L^3(B_{\H}(e_\H,r)) = C r^4$ for some fixed constant $C>0$, the claim follows. 
\end{proof}

Due to Proposition \ref{RT AA H contact equiv} we obtain the same result for the roto-translation group $\RT$ as well, and moreover, according to Proposition \ref{Q-par-prop}, it follows that 

\begin{prop}\label{H,RT loc 4-reg, 4-parab}
The metric measure spaces $(\H,d_\H,\mu_\H)$ and 
$(\RT, d_{\RT}, \mu_{\RT})$ are locally $4$-Ahlfors regular and $4$-parabolic.     
\end{prop}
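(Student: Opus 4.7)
The proof naturally splits into two claims for each of the two groups: local $4$-Ahlfors regularity and $4$-parabolicity.

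\textbf{Local $4$-Ahlfors regularity.} For $\H$, the statement is classical and in fact global: the Heisenberg dilations $\delta_r(x,y,t)=(rx,ry,r^2t)$ are automorphisms of the group that scale $d_\H$ by $r$ and the Lebesgue measure $\mathcal{L}^3$ by $r^4$. Combined with left-invariance of $d_\H$ and $\mu_\H=\mathcal{L}^3$, this gives $\mu_\H(\overline{B}_{d_\H}(p,r))=C r^4$ for all $p\in\H$ and all $r>0$. For $\RT$, I would repeat verbatim the argument of Proposition \ref{RT and AA loc Ahl 4-reg}, now invoking Proposition \ref{RT contact equiv to H} in place of Proposition \ref{global-cont AA Heis}: the global contactomorphism $f:\RT\to\H$ is a smooth diffeomorphism, hence locally bi-Lipschitz between $(\RT,d_\RT)$ and $(\H,d_\H)$ on compact sets, while the pull-back $f^*\mu_\H$ differs from $\mu_\RT$ by a smooth, locally bounded, non-vanishing factor. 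Sandwiching $\RT$-balls between images of $\H$-balls under $f^{-1}$ then transfers the Ahlfors estimate from $\H$ to $\RT$.

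\textbf{$4$-parabolicity.} Both statements are obtained by verifying the hypothesis of Proposition \ref{Q-par-prop} at the identity and applying it. For $\H$, global $4$-Ahlfors regularity directly gives the volume growth estimate
\begin{equation*}
\mu_\H(B_\H(e_\H,R))\leq C R^4\quad\text{for all }R>0,
\end{equation*}
so Proposition \ref{Q-par-prop} with $Q=Q'=4$ implies $4$-parabolicity. For $\RT$, the needed bound is even stronger and is precisely what Proposition \ref{large scale RT} provides:
\begin{equation*}
\mathcal{L}^3(B_\RT(e_\RT,R))\leq C_0 R^3\quad\text{for all }R\geq R_0.
\end{equation*}
Applying Proposition \ref{Q-par-prop} with $Q=3$ and $Q'=4\geq Q$ then yields $4$-parabolicity of $(\RT,d_\RT,\mu_\RT)$.

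\textbf{Main obstacle.} There is no genuine obstacle since all the required ingredients are in place: the classical homogeneity of $\H$, the global contactomorphism $f:\RT\to\H$ (Proposition \ref{RT contact equiv to H}), the large-scale volume estimate for $\RT$ (Proposition \ref{large scale RT}), and the general parabolicity criterion (Proposition \ref{Q-par-prop}). The only point that deserves care is the transfer argument for local Ahlfors regularity on $\RT$: one must keep the radius small so that the bi-Lipschitz constants of $f$ (and the pointwise size of the Jacobian factor relating $f^*\mu_\H$ to $\mu_\RT$) remain controlled, exactly as in the proof of Proposition \ref{RT and AA loc Ahl 4-reg}. Left-invariance of $d_\RT$ and $\mu_\RT$ then upgrades the estimate from a neighbourhood of $e_\RT$ to a uniform local estimate at every base point.
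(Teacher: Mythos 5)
Your proposal is correct and follows essentially the same route the paper intends: global Ahlfors regularity of $\H$ plus Proposition \ref{Q-par-prop} for the Heisenberg claims, the transfer argument of Proposition \ref{RT and AA loc Ahl 4-reg} via the contactomorphism of Proposition \ref{RT contact equiv to H} for local regularity of $\RT$, and the volume bound of Proposition \ref{large scale RT} with $Q=3\le Q'=4$ in Proposition \ref{Q-par-prop} for $4$-parabolicity of $\RT$. The paper leaves these steps implicit in a single sentence, so your write-up is simply a more explicit version of the same argument.
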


Another consequence of Proposition \ref{global-cont AA Heis} is the following result about the existence of non-smooth QC maps of $\Aa$ that distort the Hausdorff dimension $\dim_H$ of certain Cantor sets in $\Aa$ in an arbitrary fashion:

\begin{prop} \label{h-dim}For any $s,t$, $0<s<t<4$ there exist Cantor sets $C_s \subset \Aa$ and  $C_t \subset \Aa$ such that $\dim_H(C_s) = s$ and $\dim_H(C_t) = t$ and a QC map $F: \Aa \to  \Aa$ such that $F(C_s) = C_t $. 
\end{prop}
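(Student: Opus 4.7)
The strategy is to transport the analogous result for the Heisenberg group, established by Balogh in \cite{B}, to the affine-additive group via the global contactomorphism $g: \H \to \Aa$ of Proposition \ref{global-cont AA Heis}. First, I would invoke the main theorem of \cite{B}: for every pair $0<s<t<4$ there exist Cantor sets $\tilde C_s, \tilde C_t \subset \H$ with $\dim_H(\tilde C_s)=s$ and $\dim_H(\tilde C_t)=t$, together with a quasiconformal homeomorphism $\tilde F:\H\to\H$ satisfying $\tilde F(\tilde C_s)=\tilde C_t$. Because the Kor\'anyi--Reimann flow method used in \cite{B} relies on contact Hamiltonians that may be chosen with compact support, one can in addition arrange that $\tilde F$ coincides with the identity outside some compact set $B \subset \H$ containing $\tilde C_s \cup \tilde C_t$.

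Next, I would define $C_s:=g(\tilde C_s)$, $C_t:=g(\tilde C_t)$, and $F:=g\circ\tilde F\circ g^{-1}:\Aa\to\Aa$, with $g$ the explicit smooth diffeomorphism from \eqref{Heis contact to Aa}. As the image of a Cantor set under a homeomorphism, each of $C_s, C_t$ is a Cantor set in $\Aa$, and evidently $F(C_s)=C_t$. To identify the Hausdorff dimensions $\dim_H(C_s)=s$ and $\dim_H(C_t)=t$, I would use that $g$ is locally bi-Lipschitz by Proposition \ref{global-cont AA Heis}; since $\tilde C_s, \tilde C_t$ are compact they admit a finite covering by balls on which $g$ is bi-Lipschitz with a uniform constant, and Hausdorff dimension is preserved piecewise by bi-Lipschitz maps.

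It remains to check that $F$ is quasiconformal on all of $(\Aa, d_\Aa)$, and this is where the compact support of $\tilde F$ plays the essential role. Outside the compact set $g(\overline B) \subset \Aa$ the map $F$ reduces to the identity, so $H_F \equiv 1$ there. On $g(\overline B)$ the restrictions of $g$ and $g^{-1}$ are bi-Lipschitz with uniform constants (smooth diffeomorphisms are bi-Lipschitz on compacta), while $\tilde F$ is QC with some constant $\tilde K \ge 1$; multiplying these distortion bounds at each point yields a uniform upper bound on $H_F$, establishing the QC property of $F$ globally on $\Aa$. The main obstacle in this approach is precisely the global-versus-local bi-Lipschitz character of $g$: without the possibility of taking $\tilde F$ compactly supported, one could not control $H_F$ at infinity in $\Aa$, so the crux of the argument is securing the compactly supported version of Balogh's construction.
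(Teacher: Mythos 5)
Your proposal is correct and follows essentially the same route as the paper: both transport Theorem 1.1 of \cite{B} via the global contactomorphism $g$, setting $F = g\circ \tilde F\circ g^{-1}$, with the compactly supported version of Balogh's map (which is in fact already part of the statement of Theorem 1.1 in \cite{B}, where $G=\mathrm{id}_{\H}$ outside $B_{\H}(e,1)$) ensuring quasiconformality of $F$ at infinity. Your elaboration of why the compact support and the local bi-Lipschitz character of $g$ suffice is exactly the content the paper compresses into ``it is easy to see.''
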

\begin{proof}
 The proof is based on the corresponding result in \cite{B} for the case of the Heisenberg group. In fact Theorem 1.1 in \cite{B} states that if $0<s<t<4$ there exist Cantor sets $K_s \subset \H$ and $K_t \subset \H$ and a QC map $G: \H \to \H$ such that  $\dim_H(K_s) = s$, $\dim_H K_t =t$, $K_s \subset B_{\H}(e, 1)$, $K_t \subset B_{\H}(e, 1)$, $G(K_s) = K_t$ and $G = id_{\H}$ outside of 
 $B_{\H}(e,1)$. 
 It is easy to see that the map $F: \Aa \to \Aa$ defined by 
 $F= g\circ G \circ g^{-1}$ satisfies the properties in the statement for the sets $C_s= g(K_s)$ and $C_t = g(K_t)$.
\end{proof}

More results on quasiconformal maps defined on the affine-additive group $\Aa$, including methods of constructions such maps and study of their extremality  will be contained in the forthcoming paper \cite{BBP} of the authors as well as in the  dissertation of the second author \cite{Bubani}.

\section{Proof of the main result} \label{Main}

Let us observe, first, that hyperbolicity of a metric measure space $(X, d_X, \mu_X)$ holds, if there exists a compact set $E\subset X$ and sequence of compact sets $F_n \subseteq X$ such that 
$$\dist(E, F_n):= \inf \{ d(x,y): x \in E, y\in F_n \} \to \infty$$ and
$$ \liminf_{n \to \infty} \Mod_Q(Q, F_n) > 0. $$
To do this, let us pick $x_0 \in E$. 

We shall consider $R_n = \inf \{ d(x_0,y): y\in F_n \}$. Note, that  $R_n \to \infty $ and any curve connecting $E$ and $F_n$ must have a sub-curve connecting $\partial B(x_0, R_0)$ and $\partial B(x_0, R_n)$. Thus, by \eqref{subcurve-mod} we have the inequality 
$$ \Mod_Q(\mathcal D(R_0, R_n)) \geq \Mod_Q(E, F_n).$$
Since $ \liminf_{n \to \infty} \Mod_Q(E, F_n) > 0$ we obtain that $(X, d_X, \mu_X)$ is hyperbolic.

The idea of the proof of Theorem \ref{main} is to construct compact sets $E$ and $F_n$ in $\Aa$ with the above properties. This is explicitly done as follows:

Let $n\in\N$, $n\ge 2$. We define
\begin{align*}
    E&=\{(a,1,t)\in\Aa:a\in[-1,1]\text{ and } t\in[-1,1]\},\\
    F_n&=\{(a,\frac{1}{n},t)\in\Aa:a\in[-1,1]\text{ and } t\in[-1,1]\}.
\end{align*}
Next, for each such $n$ we define the following curve families of piecewise smooth horizontal curves:
\begin{align}\label{Gamma_n}
    \Gamma_n=\{\gamma,&\gamma:[0,1]\rightarrow\Aa \;\text{ such that} \;
         \gamma(0)\in E  \;\text{and}\;
         \gamma(1)\in F_n\}.
\end{align}
The following estimate holds.
\begin{prop}\label{Curve mod lemma}
    There exists some $M>0$ such that $\Mod_4(\Gamma_n) >M$ for all $n\in\N$, $n\geq2$.
\end{prop}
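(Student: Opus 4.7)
The plan is to bound $\Mod_4(\Gamma_n)$ from below by restricting to a convenient sub-family of \emph{vertical} horizontal curves and then applying Hölder's inequality tailored to the measure $d\mu_\Aa = \lambda^{-2}\,da\,d\lambda\,dt$. I do not need the full family $\Gamma_n$; by the monotonicity principle \eqref{subcurve-mod}, or rather its obvious converse (a sub-family has no larger modulus-minimizer), it suffices to exhibit curves in $\Gamma_n$ on which every admissible density can be controlled.

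First, for each $(a_0,t_0)\in[-1,1]^2$ I would single out the curve
\[
\gamma_{a_0,t_0}(\lambda)=(a_0,\lambda,t_0),\qquad \lambda\in[1/n,1].
\]
This is horizontal, since $\dot a=\dot t=0$ gives $\dot t/(2\lambda)-\dot a=0$, and it joins $(a_0,1,t_0)\in E$ to $(a_0,1/n,t_0)\in F_n$, hence $\gamma_{a_0,t_0}\in\Gamma_n$. Its horizontal length element is $d\ell_\Aa=d\lambda/(2\lambda)$, which is the hyperbolic length element on the vertical line in $\bH_\C^1$. If $\rho\in \mathrm{Adm}(\Gamma_n)$, admissibility along $\gamma_{a_0,t_0}$ gives
\[
1\le \int_{1/n}^{1}\rho(a_0,\lambda,t_0)\,\frac{d\lambda}{2\lambda}
= \frac12\int_{1/n}^{1}\rho(a_0,\lambda,t_0)\,\lambda\cdot\frac{d\lambda}{\lambda^{2}}.
\]

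Next I would apply Hölder's inequality with exponents $4$ and $4/3$ against the measure $d\lambda/\lambda^{2}$ on $[1/n,1]$:
\[
\int_{1/n}^{1}\rho\cdot\lambda\,\frac{d\lambda}{\lambda^{2}}
\le\left(\int_{1/n}^{1}\rho^{4}\frac{d\lambda}{\lambda^{2}}\right)^{\!1/4}
\!\left(\int_{1/n}^{1}\lambda^{4/3}\frac{d\lambda}{\lambda^{2}}\right)^{\!3/4}.
\]
The second factor is
$\bigl(\int_{1/n}^{1}\lambda^{-2/3}d\lambda\bigr)^{3/4}=\bigl(3(1-n^{-1/3})\bigr)^{3/4}\le 3^{3/4}$, which is $n$-independent. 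Rearranging yields
\[
\int_{1/n}^{1}\rho(a_0,\lambda,t_0)^{4}\,\frac{d\lambda}{\lambda^{2}}\ge \frac{16}{27},
\qquad \text{for every } (a_0,t_0)\in[-1,1]^{2}.
\]

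Finally, I would integrate this estimate over $(a_0,t_0)\in[-1,1]^{2}$ (area $4$) and extend the $\lambda$-integration to all of $(0,\infty)$:
\[
\int_{\Aa}\rho^{4}\,d\mu_\Aa\ \ge\ \int_{-1}^{1}\!\!\int_{-1}^{1}\!\!\int_{1/n}^{1}\rho(a,\lambda,t)^{4}\,\frac{d\lambda\,da\,dt}{\lambda^{2}}\ \ge\ 4\cdot\frac{16}{27}=\frac{64}{27}.
\]
Taking the infimum over $\rho\in\mathrm{Adm}(\Gamma_n)$ gives $\Mod_4(\Gamma_n)\ge M:=64/27$, uniformly in $n$. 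No calculation here is deep; the only non-routine point is choosing the Hölder exponents so that the exponent $Q=4$ interacts correctly with the weight $\lambda^{-2}$ of $\mu_\Aa$ and produces an $n$-independent bound, which is exactly what the $4$-regularity/$4$-hyperbolicity balance forces.
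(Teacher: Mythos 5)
Your proof is correct and follows essentially the same route as the paper: the same sub-family of vertical curves $\lambda\mapsto(a,\lambda,t)$, the same admissibility inequality $\int_{1/n}^{1}\rho\,\tfrac{d\lambda}{2\lambda}\ge 1$, and H\"older with exponents $4$ and $\tfrac43$ calibrated to the weight $\lambda^{-2}$. The only (cosmetic) difference is that you apply H\"older on each vertical line before integrating over $(a,t)\in[-1,1]^2$, whereas the paper first integrates over the square and then applies H\"older on the box $P_n$; both yield the same uniform lower bound.
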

\begin{proof}
We consider the sub-family $\Gamma^0_n\subset\Gamma_n$ which comprises curves $\gamma:[0,1]\rightarrow\Aa$ given by 
\begin{equation*}
\gamma(s)=\left(a,1-\left(1-\frac{1}{n}\right)s,t\right),\; x\in[-1,1],\,t\in[-1,1].    
\end{equation*}
It is straightforward to check that the curves in $\Gamma^0_n$ are horizontal with $\gamma(0)\in E$ and $\gamma(1)\in F_n$ for all $n\in\N$, $n\ge 2$.
Further, from \eqref{horiz speed} we obtain that 
\begin{equation*}
    |\dot\gamma(s)|_H=\frac{1-\frac{1}{n}}{2\left(1-\left(1-\frac{1}{n}\right)s\right)}.
\end{equation*}
If now $\rho\in{\rm Adm}(\Gamma^0_n)$,  then we have
  \begin{equation*}
      \int_0^1 \rho\left(a,1-\left(1-\frac{1}{n}\right)s,t\right)\frac{1-\frac{1}{n}}{2\left(1-\left(1-\frac{1}{n}\right)s\right)}ds\geq1,
  \end{equation*}
which, under integration by substitution with $\lambda(s)=1-\left(1-\frac{1}{n}\right)s$, gives
\begin{equation}\label{integ1}
    \int_\frac{1}{n}^1\frac{\rho(a,\lambda,t)}{2\lambda}\,d\lambda\geq1,\,\forall n\geq2.
\end{equation}
Next, by integrating \eqref{integ1} with respect to  $x\in[-1,1]$ and $t\in[-1,1]$, we obtain
\begin{equation}\label{integ2}
    \int_{-1}^1\int_{-1}^1\int_{\frac{1}{n}}^1\frac{\rho(a,\lambda,t)}{\lambda}\,da\,d\lambda\,dt\geq8,\,\forall n\geq2.
\end{equation}
At this point, for $n\ge 2$ we define the sets $$P_n=\left\{(a,\lambda,t)\in\Aa:\;a\in[-1,1],\lambda\in\left[\frac{1}{n},1\right],\;t\in[-1,1]\right\}$$ and we apply H\"older's inequality in \eqref{integ2} with respect to $\frac{\rho(a,\lambda,t)}{\sqrt{\lambda}}\cdot\frac{\mathcal{X}_{P_n}(a,\lambda,t)}{\sqrt{\lambda}}$ and with conjugated exponents $4$ and $\frac{4}{3}$, to obtain
\begin{equation}\label{ineq depending on n}
    \left(\int_{\Aa}\rho^4(a,\lambda,t)\frac{da\,d\lambda\,dt}{\lambda^2}\right)^{\frac{1}{4}}\left(\int_{\Aa}\mathcal{X}_{P_n}(a,\lambda,t)\frac{1}{\lambda^\frac{2}{3}}\,da\,d\lambda\,dt\right)^{\frac{3}{4}}\geq8,\quad\forall n\geq2.
\end{equation}
Now we observe that
\begin{equation*}
    \int_{\Aa}\mathcal{X}_{P_n}(a,\lambda,t)\frac{1}{\lambda^\frac{2}{3}}\,da\,d\lambda\,dt\le 4\int_0^1\frac{1}{\lambda^\frac{2}{3}}\,d\lambda=12,\quad \forall n\geq2.
\end{equation*}
The latter inequality combined with \eqref{ineq depending on n} gives
\begin{equation*}
    \int_{\Aa}\rho^4(a,\lambda,t)\frac{da\,d\lambda\,dt}{\lambda^2}\geq\frac{2^4}{3^3}.
\end{equation*}
Finally, the proof is concluded by taking the infimum over all $\rho\in {\rm Adm}(\Gamma^0_n)$.
\end{proof}

The first consequence of Theorem \ref{main} is the following: 
\begin{cor}
    There is no QC map between $\Aa$ and $\H$ or $\Aa$ and $\RT$. 
\end{cor}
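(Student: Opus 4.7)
The plan is to combine the main theorem with the abstract ``no QC map between hyperbolic and parabolic'' result established earlier in Section~\ref{Prelim}. That result asserts that between two proper, locally Ahlfors $Q$-regular metric measure spaces, a $Q$-hyperbolic one and a $Q$-parabolic one cannot be linked by a quasiconformal map. The hypotheses needed are already in hand: Theorem~\ref{main} gives that $(\Aa,d_\Aa,\mu_\Aa)$ is locally $4$-Ahlfors regular and $4$-hyperbolic, while Proposition~\ref{H,RT loc 4-reg, 4-parab} provides the complementary statement that both $(\H,d_\H,\mu_\H)$ and $(\RT,d_\RT,\mu_\RT)$ are locally $4$-Ahlfors regular and $4$-parabolic. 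Properness of all three spaces is immediate, since each sub-Riemannian distance is geodesic and induces the manifold topology on a finite-dimensional Lie group, so closed bounded sets are compact.

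Invoking the abstract theorem directly rules out the existence of any QC map $f:\Aa\to\H$ and any QC map $f:\Aa\to\RT$. For the reverse direction, I would use the standard fact that for a QC homeomorphism $f$ between locally Ahlfors $Q$-regular metric measure spaces, the inverse $f^{-1}$ is again QC (with a possibly different constant), since the metric definition \eqref{metricKqc} is symmetric under passing to $f^{-1}$ up to replacing $K$ by some $K'$. Thus a hypothetical QC map $\H\to\Aa$ would yield by inversion a QC map $\Aa\to\H$, contradicting what has just been established; the same argument rules out a QC map $\RT\to\Aa$. Alternatively, the second assertion of Theorem~\ref{main} already guarantees that every quasiregular map $\H\to\Aa$ is constant, which covers the case $\H\to\Aa$ even without appealing to inversion.

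Since all substantive content has already been proved, this corollary is essentially a bookkeeping step. The one mild subtlety is to confirm that the abstract theorem, which a priori only gives non-existence in the direction hyperbolic $\to$ parabolic, is sufficient to exclude QC maps in both directions; this is handled either by the symmetry of metric quasiconformality under inversion or by directly invoking the quasiregular rigidity statement of Theorem~\ref{main}. I would present the argument as two short cases (one for each target) and point to the fact that the roto-translation case is completely parallel to the Heisenberg case, thanks to Proposition~\ref{H,RT loc 4-reg, 4-parab}.
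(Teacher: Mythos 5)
Your proposal is correct and matches the paper's intended argument: the corollary is stated as an immediate consequence of Theorem \ref{main} together with the hyperbolic-versus-parabolic non-existence theorem of Section \ref{Prelim} and Proposition \ref{H,RT loc 4-reg, 4-parab}, which is exactly what you do. Your extra remark that the inverse of a QC map is QC (to cover both directions) is a sensible point that the paper leaves implicit.
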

For a map $f$ between metric spaces, let's recall the quantity $H_f(\cdot)$ from \eqref{metricKqc} and define $B_f$ as the branch set (i.e., the set of points where $f$ does not define a local homeomorphism). We use the following definition of quasiregular (QR) maps from \cite{FLP} 
\begin{defn}
    Let $M$ and $N$ be any sub-Riemannian manifolds among $\H$, $\RT$ and $\Aa$. We
call a mapping $f : M \to N$ $K$-quasiregular if it is constant, or if:
\begin{enumerate}[(1)]
    \item $f$ is a branched cover onto its image
(i.e., continuous, discrete, open and sense-preserving),
    \item $H_f(\cdot)$ is locally bounded on $M$,
    \item $H_f(p) \leq K$ for almost every $p \in M$,
    \item  the branch set $B_f$ and its image have measure zero.
\end{enumerate}
A mapping is said to be \textit{quasiregular} if it is $K$-quasiregular for some $1\leq K<\infty$. 
\end{defn} 

From the definition it is clear, every QC map is QR. On the other hand, the class of QR maps can be substantially larger than the class of QC maps.

Let us recall, that by Theorem 4.8.1 from \cite{FLT} if $f: \H \to N$ is a QR map where $N$ is $4$-hyperbolic then $f$ must be constant. Applying this statement to our situation, we obtain the following result:

 \begin{thm} If $f: \H \to \Aa$ is a quasiregular map, then $f$ is constant. 
     \end{thm}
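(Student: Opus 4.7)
The plan is to apply Theorem 4.8.1 of \cite{FLT}, which asserts that any quasiregular map from $\H$ into a locally $4$-Ahlfors regular, $4$-hyperbolic metric measure space must be constant. All of the hypotheses have effectively been established in the preceding sections, so the proof reduces to a verification of assumptions plus an invocation of the cited result.

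First, I would verify the necessary properties of the target. By Proposition \ref{RT and AA loc Ahl 4-reg}, the space $(\Aa, d_\Aa, \mu_\Aa)$ is locally $4$-Ahlfors regular, and by Theorem \ref{main} together with Proposition \ref{Curve mod lemma}, it is $4$-hyperbolic. Indeed, the compact sets $E$ and $F_n$ constructed just above satisfy $\dist(E, F_n) \to \infty$ while $\liminf_{n\to\infty} \Mod_4(\Gamma_n) \geq M > 0$; via the sub-curve comparison \eqref{subcurve-mod} this transfers to the ring domains $\mathcal{D}(R_0, R_n)$ and yields the non-vanishing $4$-capacity at infinity required for hyperbolicity. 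Second, I would recall that the source $(\H, d_\H, \mu_\H)$ is locally $4$-Ahlfors regular and $4$-parabolic by Proposition \ref{H,RT loc 4-reg, 4-parab}, so Theorem 4.8.1 of \cite{FLT} applies verbatim and yields the conclusion.

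Conceptually, the mechanism behind that cited theorem is the following. For a non-constant quasiregular $f: \H \to \Aa$, a $4$-modulus pull-back inequality in the spirit of \eqref{mod-qc} continues to hold in the branched-cover setting, with a multiplicative constant depending on the distortion $K$ and a local multiplicity bound on $f$. Pulling back a curve family realizing the lower bound on $\Mod_4$ at infinity in $\Aa$ produces a curve family in $\H$ escaping to infinity whose $4$-modulus stays uniformly bounded below, which directly contradicts the $4$-parabolicity of $\H$. The sole non-trivial technical step, which would be the principal obstacle if one attempted to redo the argument from scratch rather than cite \cite{FLT}, is the extension of the quasiconformal modulus inequality \eqref{mod-qc} to quasiregular maps between sub-Riemannian spaces; once that machinery is taken on faith, the contradiction is immediate and the theorem follows.
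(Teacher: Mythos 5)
Your proposal is correct and follows the paper's own argument exactly: the paper likewise proves this theorem by citing Theorem 4.8.1 of \cite{FLT} and combining it with the $4$-hyperbolicity of $(\Aa, d_\Aa, \mu_\Aa)$ established via Proposition \ref{Curve mod lemma}. The additional conceptual sketch of the mechanism behind the cited theorem is accurate but not needed, since the paper also takes that result as a black box.
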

In contrast to the previous statement, we note that there can be plenty of examples of QR maps $f:\Aa\to\H$. One such map is the following:
\begin{exam}
    Let $f:\Aa\to\H$ be the map defined by
\begin{equation*}
f(a,\lambda,t)=(-\sqrt{\lambda}\cos a,\sqrt{\lambda}\sin a,t),\;(a,\lambda,t)\in\Aa .      
\end{equation*}
By a  direct calculation one can verify the contact property of $f$, namely $f^*\vartheta_\H=2\lambda\vartheta$. Moreover, denoting $f(a,\lambda,t)=(x,y,t)\in\H$, one can check that $f_*U=yX-xY$ and $f_*V=xX+yY$. Using this, we have that 
$$f_*(\alpha U +\beta V)= (\alpha y +\beta x)X + (\beta y-\alpha x) Y,
$$
for any $\alpha, \beta \in \R$.

Since $\{U, V\}$, resp. $\{X,Y\}$, is the orthonormal basis in the sub-Riemannian metric of $\Aa$, resp. $\H$, we obtain that 
$$|f_*(\alpha U + \beta V)|_{\H} = \sqrt{(\alpha^2 + \beta^2)(x^2 + y^2)}$$
and therefore
$$ H_f(a, \lambda, t)= \frac{\max \{ |f_*(\alpha U + \beta V)|_{\H}: \alpha^2 + \beta^2 =1 \} }{\min \{ |f_*(\alpha U + \beta V)|_{\H}: \alpha^2 + \beta^2 =1 \}}=1,$$
for every point $(a, \lambda,t) \in \Aa$.  See also Proposition 2.4 in \cite{FLP} for a different way to compute the value of $H_f(\cdot)$.

Furthermore, note, that  a direct computation, gives  $\det f_*= \frac{1}{2}$; and thus $f$ is a local diffeomorphism at every point. This means that the branch set  $B_f$  of $f$ is empty, and thus $f$ is an immersion of $\Aa$ into $\H$. Consequently we conclude that $f$ is $1$-quasiregular. 

Further examples of QR maps $g: \Aa \to \H$ can be obtained as compositions  $g= h\circ f $ where $h: \H \to \H$ is a QC map of the Heisenberg group. 
\end{exam}

\bibliographystyle{plain}  
\bibliography{My_Library}    

\Addresses

\end{document}